\newcommand{\dint}{\displaystyle\int}
\theoremstyle{plain}
\newtheorem{theorem}{Theorem}[section]
\newtheorem{hy}{Assumption}[section]
\newtheorem{corollary}[theorem]{Corollary}
\newtheorem{lemma}[theorem]{Lemma}
\newtheorem{proposition}[theorem]{Proposition}
\theoremstyle{definition}
\theoremstyle{remark}
\newtheorem{remark}[theorem]{Remark}
\numberwithin{equation}{section}
\numberwithin{theorem}{section}
\begin{document}
	\renewcommand{\thefootnote}{\fnsymbol{footnote}}
	\begin{center}
		{\Large \textbf{Information-Based Approach: Pricing of a Credit Risky Asset in the Presence of Default Time}} \\[0pt]
		~\\[0pt] \textbf{Mohammed Louriki}\footnote[1]{Mathematics Department, Faculty of Sciences Semalalia, Cadi Ayyad University, Boulevard Prince Moulay Abdellah,	P. O. Box 2390, Marrakesh 40000, Morocco. E-mail: \texttt{louriki.1992@gmail.com}}
		\\[0pt]
	\end{center}
	\begin{abstract}
		We extend the  information-based asset-pricing framework by Brody, Hughston \& Macrina to incorporate a stochastic bankruptcy time for the writer of the asset. Our model introduces a non-defaultable cash flow $Z_T$ to be made at time $T$, alongside the time  $\tau$ of a possible bankruptcy of the writer of the asset are in line with the filtration generated by a Brownian random bridge with length $\nu=\tau \wedge T$ and pinning point $\sigma Z_T$, where $\sigma$ is a constant. Quantities $Z_T$ and $\tau$ are not necessarily independent. The model does not depend crucially on the interpretation of $\tau$ as a bankruptcy time. 
		We derived the price  process of the asset and compute the prices of associated options. The dynamics of the price process satisfy a diffusion equation. Employing the approach of P.-A.~ Meyer, we provide the explicit computation of the compensator of $\nu$. Leveraging  special properties of the bridge process, we also provide the explicit expression of the compensator of $Z_T\,\mathbb{I}_{[\nu,+\infty)}$. The resulting conclusion highlights the totally inaccessible property of the stopping time $\nu$. This characteristic is particularly suitable for financial markets where the time of default of a writer cannot be predictable from any other signal in the system until default happens.
	\end{abstract}
	\smallskip
	\noindent 
	\textbf{Keywords:} Brownian random bridge, semimartingale, local time, compensator process, information-based asset pricing, credit risk, default time, totally inaccessible stopping time.\\
	\\ 
	\\
	\textbf{Mathematics Subject Classification 2020:} 60G40, 60G46, 60J25, 60J55
	\section{Introduction }
	\,\,\,\,\,\,\, Financial crises in the recent history, notably the 2008 collapse of Lehman Brothers, a culmination of the sub-prime mortgage crisis, have emphasized the critical need for modelling of credit risk. A particular role in this modelling is played by the pioneering work by Brody, Hughston and Macrina \cite{BHM2007}, which introduced an information-based approach for credit risk. This paper is based on the information-based asset pricing framework establishment in \cite{BHM2007} by Brody, Hughston and Macrina (hereafter referred to as BHM framework). We delve into the BHM framework, specifically examining upon essential components relevant to modelling defaultable bonds within a more generalized context. The goal is to enhance the applicability and robustness of this information-based approach to credit risk modelling. 	
	In \cite{BHM2007}, Brody, Hughston and Macrina construct a model that serves as the starting point of a new class of models designed to address certain limitations of the two primary approaches to credit risk: the structural approach and the family of reduced-form models:
	
	Structural models concentrate on a firm’s assets and liabilities, providing a mechanism for default. The probability of default is endogenous, typically triggered when the value  of the firm’s assets hits a barrier representing default. For instance, in the Merton’s model \cite{Merton}, the value process $V=\left(V_{t}, t \geq 0\right)$ of a firm is assumed to follow the dynamics of geometric Brownian motion.
	$$
	\mathrm{d} V_{t}=V_{t}\left(\mu \mathrm{d} t+\sigma \mathrm{d} B_{t}\right), S_{0}=x>L>0 .
	$$
	where $B=\left(B_{t}, t \geq 0\right)$ is a Brownian motion. The bankruptcy time $\tau$ of the company is defined as
	$$
	\tau:=\inf \left\{t>0: V_{t} \leq L\right\}.
	$$
	
	In contrast to structural models, under reduced-form models, default is no longer tied to the firm’s assets dropping below a specific threshold level.  Instead, default takes place based on an exogenous hazard rate process. 
	
	The BHM approach, developed by Brody et al.~ \cite{BHM2007}, aims to integrate elements from both structural and reduced-form models, effectively mitigating their respective limitations. This approach capitalizes on the strengths of each model: the economic and intuitive appeal of the structural approach, and the tractability and empirical fit of the reduced-form approach. 
	A key conceptual tool within this framework is the use of market information processes, implemented through  Brownian bridges with random pinning point. In their methodology, they model the flow of information accessible to market participants concerning a cash flow corresponding to a future random payment $Z_T$ at maturity $T$ with the natural, completed filtration generated by the process $\xi^T=(\xi^T_t, t\leq T)$, defined by
	\begin{equation}
		\xi_t^T=\sigma t Z_T+\beta_t^T.
	\end{equation}
	where $\beta^T = (\beta_t^T, 0\leq t \leq T)$ is a Brownian bridge on the deterministic time interval $[0, T]$, going from zero to zero. The process $\beta^T$ is independent of $Z_T$, and $\sigma > 0$ is a positive constant. The concept idea is that the true information, represented by the term $\sigma t Z_T$, about the cash flow is disturbed by some noisy information (rumors, mispricing, etc.), represented by the bridge process. This makes that at time $t = 0$ there is no information, and only at time $t = T$, the precise information about the true value of $Z_{T}$ is revealed (investors have perfect information about the values of $Z_{T}$ at time $T$). 
	In this context, it is worth noting the works by Brody, Hughston and Macrina \cite{BHM2010}, \cite{BHM2008} and \cite{BHM}, Rutkowski and Yu \cite{RuYu}, Hoyle, Hughston and Macrina \cite{HHM}, Brody and Law \cite{BL}, Hoyle, Macrina and Meng\"ut\"urk \cite{HMM}, and  Meng\"ut\"urk \cite{Men}. In this credit risk models, information concerning the random cash-flow $Z_T$ is modeled explicitly, but the default time is not, as the payment is contractually set to take place at maturity only. However, information about definite default before maturity will immediately influence the price. Motivated by the challenge of modelling information related to the default time of a financial company, this paper proposes a mathematical model. In this model, a non-defaultable cash flow with an agreed single payment $Z_T$ at maturity $T$ and the time of bankruptcy $\tau$ of the writer of the associated asset are both modelled with the filtration generated by the information process $\zeta^T=(\zeta^T_t, t\leq T)$, defined by
	\begin{equation}
		\zeta^T_t=W_{t\wedge \tau}-\dfrac{t\wedge \tau}{\tau\wedge T} W_{\tau\wedge T}+\sigma \dfrac{t\wedge \tau }{\tau\wedge T} Z_T.
	\end{equation}
	The value of $Z_T$ becomes known at the positive random time $\tau\wedge T$. Specifically, the cash flow is measurable at time $T$ when the contract expires. However, if the counterpart declares bankruptcy at time $\tau$ when $\tau<T$, the exact value of $Z_T$ becomes ascertainable. Since our primary interest lies in determining whether default will occur before the maturity date or not, we consider random time $\nu=\tau\wedge T$ taking values in $(0,T]$ with the understanding that if $\nu<T$ then the default time is occurred at $\nu$ before the maturity date $T$, while if $\nu=T$ the default time has not occurred before the maturity date $T$. 
	The completed natural filtration $\mathbb{F}^{\zeta^T}$ generated by $\zeta^T$ offers partial information about the cash flow $Z_T$ and the bankruptcy time $\tau$ before it occurs. The concept of using the length of a bridge process for modelling the bankruptcy time dates to the work of Bedini et al.~ \cite{BBE}, Erraoui et al.~ \cite{EHL}, \cite{EHL(Levy)}, \cite{EHL(Brownian-Levy)} and  \cite{EL}. We mainly deal with the case where $Z_T$ takes values in $\{z_1,\ldots, z_l\}$ with a priori probability $\mathbb{P}(Z_T=z_i)=p_i$. Additionally, the conditional distribution of $\tau$ given $Z_T=z$ admits a continuous density function $f(.,z)$ with respect to the Lebesgue measure on $\mathbb{R}_+$. 
	Once our model is established, leveraging the properties of our market information process $\zeta^T$ such as the Markov property and the canonical decomposition in its natural filtration, we derive an expression for the value process of a contract that delivers the cash flow $Z_T$ at time $T$, we provide estimates of the a priori unknown bankruptcy time, relying on observations of the information process $\zeta^T$ up to time $t$. Subsequently, we analyse the dynamics of the price process and demonstrate that the price process satisfies a diffusion equation. The well-known Doob-Meyer decomposition theorem states that if $X$ is a sub-martingale, then it can be expressed in the form $X=M+K$, where $M$ is a local martingale, and $K$ is a c\`ad-l\`ag increasing predictably process with $K_0=0$. A special case of interest in the theory of credit risk is the case where the process given by
	\begin{equation}
		\mathbb{I}_{[S, +\infty)}-K \label{eqcompensatorintro}
	\end{equation} 
	is a martingale. The process $K$ in \eqref{eqcompensatorintro} is known as the compensator of the stopping time $S$, and the relation between $S$ and its compensator $K$ can be characterized as follows: $S$ is predictable if and only if $K$ is a.s.~ constant except for a possible unit jump, $S$ is accessible if and only if $K$ is a.s.~ purely discontinuous, while $S$ is totally inaccessible if and only if $K$ is a.s.~ continuous (see \cite{Kall} , Chapter 25]). 
	It is evident that the bankruptcy time $\tau$ holds a central focus for market  agents, who seek to acquire comprehensive information about it. To gather pertinent insight on bankruptcy time $\tau$, it becomes crucial to determine whether the nature of the stopping time $\nu=\tau\wedge T$ is predictable, accessible, or totally inaccessible. Market agents can anticipate the occurrence of bankruptcy when $\nu$ is predictable, whereas the bankruptcy takes place unexpectedly  when $\nu$ is totally inaccessible. To this end, following the P.-A.~ Meyer approach, we provide the explicit computation of the compensator of $\nu$ and we show that it is given by
	\begin{equation}
		K_t=\displaystyle\sum\limits_{k=1}^l\displaystyle\int_{0}^{t\wedge \nu}\dfrac{\dfrac{p_k}{\sqrt{2\pi s}}\exp\Big(\frac{\sigma^2z_k^2}{2s}\Big)f(s,z_k)}{\displaystyle\sum\limits_{i=1}^l\bigg[\displaystyle\int_{s}^{T}\Phi_{s,r}(z_k,z_i)f(r,z_i)\mathrm{d}r+F(T,z_i)\,\Phi_{s,T}(z_k,z_i)\bigg]p_i}\mathrm{d}L^{\zeta^T}(s,\sigma z_k),
	\end{equation}
	where
	\begin{equation}
		F(T,z)=\int_{T}^{+\infty}f(r,z)\mathrm{d}r,
	\end{equation}
	\begin{equation}
		\Phi_{s,t}(x,y)=\sqrt{\frac{t}{t-s}}\exp\Big(-\frac{\sigma^2}{2}\Big(\frac{(y-x)^2}{t-s}+\frac{y^2}{t}\Big)\Big),\,s<t,\,y\in \mathbb{R}
	\end{equation}
	and $L^{\zeta^T}(t,x)$ is the local time of the information process $\zeta^T$ at level $x$ up to time $t$. Utilizing the characterization of the special categories of stopping times based on the regularity of their associated compensators, we establish that $\nu=\tau\wedge T$ is a totally inaccessible stopping time since its compensator $K$ is continuous. This characteristic is particularly fitting for financial markets where the timing of a writer default cannot be predictable from any other signal in the system until default event occurs. Leveraging the fact that the cash flow $Z_T$ is $\mathcal{F}_{\nu}^{\zeta^T}$-measurable, we further derive expression of the compensator associated with the process $Z_T\,\mathbb{I}_{[\nu, +\infty)}$. 
	For an in-depth exploration of compensator concepts, we recommend chapter 3 in Protter \cite{P}, which extensively covers the study of compensators. Additional information and examples can be found in Aksamit and Jeanblanc \cite{AJ}. Also, refer to Janson et al.~ \cite{A} and \cite{JMP}  for conditions ensuring that the compensator is absolutely continuous with respect to the Lebesgue measure. In \cite{BBEcompensator}, Bedini et al.~ tackle the problem of explicitly computing the compensator of $\tau$ within the filtration generated by a Brownian bridge over the random interval $[0, \tau]$, going from zero to zero.
	
	The paper is organized as follows. 
	In Section 2, we introduce our model and examine the fundamental properties of our market information process. 
	Section 3 is dedicated to specify the pricing formula for a default bond. We provide the dynamics of the price process and demonstrate that the price process satisfies a diffusion equation. 
	In section 4, we focus on computing the the compensator of the random time $\nu$, which is the compensator of the process $H=(\mathbb{I}_{\{\tau\leq t\}}, t\geq 0)$. Additionally, leveraging the special properties of the market information process, we derive the explicit expression of the compensator of the process $Z_T\,H$.
	
	The following notations will be used throughout the paper: For a complete probability space $(\Omega,\mathcal{F},\mathbb{P})$, $\mathcal{N}_{\mathbb{P}}$ denotes the collection of $\mathbb{P}$-null sets. We denote by $\delta_x$ the Dirac measure centered on some fixed point $x$. If $\theta$ is a random variable, then we denote by $\mathbb{P}_{\theta}$ the law of $\theta$ under $\mathbb{P}$. If $E$ is a topological space, then the Borel $\sigma$-algebra over $E$ will be denoted by $\mathcal{B}(E)$. The characteristic function of a set $A$ is written $\mathbb{I}_{A}$. The function $p(t, x, y)$, $x, y\in \mathbb{R}$, $t\in\mathbb{R}_+$, denotes the Gaussian density function with variance $t$ and mean $y$, if $y=0$, for simplicity of notation we write $p(t,x)$ rather than $p(t, x, 0)$. Finally for any process $X=(X_t,\, t\geq 0)$ on $(\Omega,\mathcal{F},\mathbb{P})$, the completed natural filtration of $X$ will be denoted by $\mathbb{F}^{X}=(\mathcal{F}^{X}_t)_{t\geq 0}$: $$\mathcal{F}^{X}_t:=\sigma(X_s,\,\,s\leq t)\vee \mathcal{N}_{\mathbb{P}}.$$
	
	\section{Market information process}
	\label{sect2}
	\quad\,\,In this section, we explore an extension of the information-based approach for credit risk introduced by Brody, Hughston and Macrina \cite{BHM2007}. While their model explicitly models the information about the random cash-flow $Z_{T}$ the default time is fixed to $T$. Motivated by the challenge of modeling the information regarding the default time we propose an information-based model in which the cash flow and the default time are explicitly modeled. Let $\tau$ be a strictly positive random time representing the bankruptcy time. We consider the market information process $\zeta^T=(\zeta^T_t, t\leq T)$, defined by
	\begin{equation}
		\zeta^T_t=W_{t\wedge \nu}-\dfrac{t\wedge \nu}{\nu} W_{\nu}+\sigma \dfrac{t\wedge \nu}{\nu} Z_T,\,\, t\in [0, T],
	\end{equation}
	where $\nu=\tau\wedge T$. The value of $Z_T$ becomes known at the positive random time $\tau$. Specifically, the cash flow is measurable at time $T$ when the contract expires. However, if the counterpart declares bankruptcy at time $\tau$ when $\tau<T$, people can ascertain the exact value of $Z_T$. In this context, the completed natural filtration $\mathbb{F}^{\zeta^T}$ generated by $\zeta^T$ provides partial information about the cash flow $Z_T$ and the bankruptcy time $\tau$. Similar to \cite{BHM2007}, our model is determined through the following postulates:
	\begin{enumerate}
		\item[(i)] A random variable $Z_T$, a Brownian motion $W$, and a strictly positive random time $\tau$ are given on some probability space $(\Omega, \mathbb{F}, \mathbb{P})$ such that the Brownian motion $W$ is independent of $(\tau,Z_T)$.
		\item[(ii)] The filtration $\mathbb{F} = \mathbb{F}^{\zeta^T} = (\mathcal{F}_t^{\zeta^T})_{0\leq t \leq T}$ is generated by the market information process $\zeta^T$, given by the formula
		\begin{equation}
			\zeta^T_t=W_{t\wedge \nu}-\dfrac{t\wedge \nu}{\nu} W_{\nu}+\sigma \dfrac{t\wedge \nu}{\nu} Z_T,\,\, t\in [0, T],
		\end{equation}
		where $\nu=\tau \wedge T$.
		\item[(iii)] The short-term interest rate $r$ is deterministic. Hence, the value $P_t^T$ at time $t$ of a default-free bond maturing at time $T$ equals
		\begin{equation}
			P_t^T=\exp\bigg(-\int_t^Tr(u)\mathrm{d}u\bigg),\quad t\in [0,T].
		\end{equation}
		\item[(iv)] The probability measure $\mathbb{P}$ is the risk-neutral probability, in the sense, that the time-$t$ price of a cash flow $Z_T$, due at time $T$, is given by an expression of the form
		\begin{equation}
			B_t^T=P_t^T\, \mathbb{E}[Z_T\vert \mathcal{F}_t^{\zeta^T}].
		\end{equation}
	\end{enumerate}
	In this study we deal with the case where $Z_T$ takes the values $\{z_1,\ldots,z_l\}$, $l\in \mathbb{N}$, with a probability $\mathbb{P}\left(Z_T=z_i\right)=p_i$. It is natural to make the assumption that the bankruptcy satisfies the condition that $\mathbb{P}(\tau\leq T)<1$, indicating that the default event may occur prior to the maturity date $T$. Moreover, our main assumption is the following:
	\begin{hy}
		The conditional distribution of $\tau$ given $Z_T=z$, denoted as $\mathbb{P}_{\tau\vert Z_T=z}$, possesses a continuous density function $f(.,z)$ with respect to the Lebesgue measure on $\mathbb{R}_+$.
	\end{hy}
	In the context of our assumption, wherein the conditional distribution of $\tau$ given $Z_T=z$ has a continuous density function $f(.,z)$, we explore various examples. Let $B$ as a Brownian motion independent of $W$ and $Z_T$, and let $\mu$ as a strictly positive real number.
	
	1. Assume that $z_i \neq 0$ for all $i \in \{1,\ldots,l\}$. Let $\tau_{Z_T}=\inf\{t>0:\,B_t=Z_T\}$, the first hitting time of $Z_T$ for the Brownian motion $B$. In this case, the conditional probability can be expressed as
	\begin{equation}
		\mathbb{P}(\tau_{Z_T}\in \mathrm{d}t\vert Z_T=z_i)=\frac{\vert z_i\vert}{\sqrt{2 \pi} t^{3 / 2}} \exp \left(-\frac{z_i^2}{2 t}\right)\mathbb{I}_{\{t>0\}}\mathrm{d}t.
	\end{equation}
	
	2. Assume $\min\limits_{1\leq i \leq l}(z_i)>0$. Let $\tau^{\mu}_{Z_T}=\inf\{t>0:\,\,B^{\mu}_t=Z_T\}$ be the first hitting time of $Z_T$ for, the Brownian motion with drift $\mu$, $B^{\mu}=(B_t+\mu t, t\geq 0)$. In this case, the conditional probability is given by
	\begin{equation}
		\mathbb{P}(\tau^{\mu}_{Z_T}\in \mathrm{d}t\vert Z_T=z_i)=\frac{ z_i}{\sqrt{2 \pi} t^{3 / 2}} \exp \left(-\frac{(z_i-\mu t)^2}{2 t}\right)\mathbb{I}_{\{t>0\}}\mathrm{d}t.
	\end{equation}
	
	3. Assuming $\min\limits_{1\leq i \leq l}(z_i)>0$, let $\sigma^{\mu}_{Z_T}=\sup\{t>0:\,\,B^{\mu}_t=Z_T\}$ denote the last time that $B^{\mu}$ hits $Z_T$. The conditional probability in this case is given by
	\begin{equation}
		\mathbb{P}(\sigma^{\mu}_{Z_T}\in \mathrm{d}t\vert Z_T=z_i)=\dfrac{\mu}{\sqrt{2\pi t}}\exp\bigg(-\dfrac{1}{2t}(\mu t-z_i)^2\bigg)\mathbb{I}_{\{t>0\}}\mathrm{d}t.
	\end{equation}
	
	4.  Assuming $\min\limits_{1\leq i \leq l}(z_i)>0$, let $R$ be a Bessel process of dimension $\delta > 2$ with index $\nu$, and let $\Lambda^{\delta}_{Z_T}$ be the last time that $R$ hits $Z_T$. The corresponding conditional probability, given that $R$ is independent of $Z_T$, is expressed as
	\begin{equation}
		\mathbb{P}(\Lambda^{\delta}_{Z_T}\in \mathrm{d}t\vert Z_T=z_i)=\frac{1}{t \Gamma(\nu)}\left(\frac{z_{i}^2}{2 t}\right)^\nu \exp\bigg( \dfrac{-z_i^2}{2t}\bigg)\mathbb{I}_{\{t>0\}}\mathrm{d}t
	\end{equation}
	where, $\Gamma$ is the gamma function.
	
	There are various examples to consider. For instance, we consider $\tau$ as either the hitting time or last passage time of a diffusion process or a L\'evy process. See, e.g., \cite{AK}, \cite{APP}, \cite{BS} and \cite{JYC}.
	
	The bridge with random length and pinning point has been investigated by Louriki \cite{L} in particularly in the scenario where the pinning point and the bridge length are independent. In this current paper, we model our market information process $\zeta^T$ as Brownian bridge with length $\nu=\tau\wedge T$ and pinning point $\sigma Z_T$. Notably, the variables $\tau$ and $Z_T$ are not necessarily independent. To gather crucial insights into the time of bankruptcy $\tau$ and the cash flow $Z_T$, a thorough examination of the basic properties of the market information $\zeta^T$ becomes essential. Hence, we embark on the task of reassessing these properties, scrutinizing the Brownian bridge without assuming independence between the length and the pinning point. We begin with the central point underlying all subsequent findings:
	\begin{proposition} For all $0<t<T$, we have, $\mathbb{P}$-a.s.,
		\begin{equation}
			\mathbb{I}_{\{ \nu\leq t \}}=\displaystyle\sum\limits_{i=1}^l\mathbb{I}_{\{\zeta^T_t=\sigma z_i\}}.\label{eqmodification}
		\end{equation}
	\end{proposition}
	\begin{proof}
		Set $\Delta=\{\sigma z_1,\ldots,\sigma z_l\}$ and for $(r,z)\in (0,+\infty)\times\mathbb{R}$ let $W^{r,z}$ be the Brownian bridge with length $r$ and pinning point $\sigma z$ associated with $W$:
		\begin{equation}
			W^{r,z}_t=W_{t\wedge r}-\dfrac{t\wedge r}{r} W_{r}+\sigma \dfrac{t\wedge r}{r} z.
		\end{equation}
		Exploiting the fact that $\mathbb{P}(Z_T\notin \{z_1,\ldots,z_l\})=0$, the set $\Delta$ is of Lebesgue measure zero, and that the law of $W_{t}^{r,z}$ is absolutely continuous with respect to the Lebesgue measure when $0<t<r$, we derive
		\begin{align*}
			\mathbb{P}(\{\zeta^T_t \in \Delta\} &\bigtriangleup \{\nu\leq t\})=\mathbb{P}(\zeta^T_t\in \Delta,t<\nu)+\mathbb{P}(\zeta_t\in \Delta^c,\nu\leq t)\\
			&=\dint_{\mathbb{R}}\dint_{t}^{+\infty} \mathbb{P}(\zeta^T_t \in \Delta \vert \nu=r,Z_T=z)\mathbb{P}_{(Z_T,\nu)}(\mathrm{d}r,\mathrm{d}z)\\&+\dint_{\mathbb{R}}\dint_{0}^{t} \mathbb{P}(\zeta^T_t\in \Delta^c \vert \nu=r,Z_T=z)\mathbb{P}_{(Z_T,\nu)}(\mathrm{d}r,\mathrm{d}z)\\
			&=\dint_{\mathbb{R}}\dint_{t}^{+\infty} \mathbb{P}(W_{t}^{r,z} \in \Delta)\mathbb{P}_{(Z_T,\nu)}(\mathrm{d}r,\mathrm{d}z)+\dint_{\mathbb{R}}\dint_{0}^{t} \mathbb{P}(W_{t}^{r,z}\in \Delta^c )\mathbb{P}_{(Z_T,\nu)}(\mathrm{d}r,\mathrm{d}z)
			\\  
			&=\dint_{\mathbb{R}}\dint_{t}^{+\infty} \mathbb{P}(W_{t}^{r,z}\in \Delta)\mathbb{P}_{(\nu,Z)}(\mathrm{d}r,\mathrm{d}z)+ \,\mathbb{P}(\nu\leq t,\sigma Z_T\in \Delta^c)\\
			&=0.
		\end{align*}
		This concludes the proof of \eqref{eqmodification}.
	\end{proof}
	\begin{remark}
		The random time $\nu$ is a stopping time with respect to $\mathbb{F}^{\zeta^T}$, the completed filtration generated by $\zeta^T$.
	\end{remark}
	\begin{theorem}
		The market information process $\zeta^T$ is a Markov process with respect to $\mathbb{F}^{\zeta^T}$, i.e., 
		\begin{equation}
			\mathbb{E}[g(\zeta^T_u)\vert\mathcal{F}^{\zeta^T}_t]=\mathbb{E}[g(\zeta^T_u)\vert\zeta^T_t],\,\,\mathbb{P}\text{-a.s.},
		\end{equation}
		for all $0 \leq t < u\leq T$ and all measurable functions $g$ such that $g(\zeta^T_u)$ is integrable.
	\end{theorem}
	\begin{proof}
		At $t=0$, the statement is self-evident. Now, let us consider the case where $t>0$. It follows from \eqref{eqmodification} that the set $\{\nu\leq t\}$ is $\sigma(\zeta^T_t)\vee \mathcal{N}_{\mathbb{P}}$-measurable. Hence, we have, $\mathbb{P}$-a.s.,
		\begin{equation}
			\mathbb{E}[g(\zeta^T_u)\mathbb{I}_{\{\nu\leq t\}}\vert\mathcal{F}^{\zeta^T}_t]=\mathbb{E}[g(\zeta^T_t)\mathbb{I}_{\{\nu\leq t\}}\vert\mathcal{F}^{\zeta^T}_t]=\mathbb{E}[g(\zeta^T_t)\mathbb{I}_{\{\nu\leq t\}}\vert\zeta^T_t]=\mathbb{E}[g(\zeta^T_u)\mathbb{I}_{\{\nu\leq t\}}\vert\zeta^T_t].
		\end{equation}
		It remains to show that, $\mathbb{P}$-a.s.,
		\begin{equation}
			\mathbb{E}[g(\zeta^T_u)\mathbb{I}_{\{t<\nu\}}\vert\mathcal{F}^{\zeta^T}_t]=\mathbb{E}[g(\zeta^T_u)\mathbb{I}_{\{t<\nu\}}\vert\zeta^T_t].
		\end{equation}
		Hence, it is sufficient to show that for all $\mathcal{F}^{\zeta^T}_t$ measurable $Y$ one has
		\begin{equation}
			\mathbb{E}[g(\zeta^T_u)\mathbb{I}_{\{t<\nu\}}Y]=\mathbb{E}[\mathbb{E}[g(\zeta^T_u)\vert\zeta^T_t]\mathbb{I}_{\{t<\nu\}}Y].\label{eqMarkovY}
		\end{equation}
		By the monotone class theorem it is sufficient to prove \eqref{eqMarkovY} for random variables $Y$ of the form 
		\begin{equation}
			Y=G(\zeta^T_t)L(\gamma_1,\ldots,\gamma_n)
		\end{equation}
		where 
		\begin{equation*}
			\gamma_k=\dfrac{\zeta^T_{t_k}}{t_k}-\dfrac{\zeta^T_{t_{k-1}}}{t_{k-1}},\,\,k=1,\ldots,n
		\end{equation*}
		$0<t_0<t_1\leq \cdots \leq t_n=t<u<T$ and $G$ and $L$ are bounded measurable functions.
		\begin{equation*}
			\alpha_k=\dfrac{W_{t_k}}{t_k}-\dfrac{W_{t_{k-1}}}{t_{k-1}},\,\,k=1,\ldots,n
		\end{equation*}
		we have $\gamma_k=\alpha_k$ on the set $\{t<\nu\}$, $k=1,\ldots,n$. Note that for $t<r$ and $z\in \mathbb{R}$, the Gaussian vectors $(\alpha_1,\ldots,\alpha_n)$ and $(W_t^{r,z},W_u^{r,z})$ are independent. Combining this, we have
		\begin{align*}
			\mathbb{E}[g(\zeta^T_u)\mathbb{I}_{\{t<\nu\}}G(\zeta^T_t)L(\gamma_1,\ldots,\gamma_n)]&=\mathbb{E}[g(\zeta^T_u)\mathbb{I}_{\{t<\nu\}}G(\zeta^T_t)L(\alpha_1,\ldots,\alpha_n)]\\
			&=\int_{\mathbb{R}}\int_{t}^{+\infty}\mathbb{E}[g(W_u^{r,z})G(W_t^{r,z})L(\alpha_1,\ldots,\alpha_n)]\mathbb{P}_{(\nu,Z)}(\mathrm{d}r,\mathrm{d}z)\\
			&=\int_{\mathbb{R}}\int_{t}^{+\infty}\mathbb{E}[g(W_u^{r,z})G(W_t^{r,z})]\mathbb{P}_{(\nu,Z)}(\mathrm{d}r,\mathrm{d}z)\mathbb{E}[L(\alpha_1,\ldots,\alpha_n)]\\
			&=\mathbb{E}[g(\zeta^T_u)G(\zeta^T_t)\mathbb{I}_{\{t<\nu\}}]\mathbb{E}[L(\alpha_1,\ldots,\alpha_n)]\\
			&=\mathbb{E}[g(\zeta^T_u)G(\zeta^T_t)\mathbb{I}_{\{t<\nu\}}]\mathbb{E}[L(\alpha_1,\ldots,\alpha_n)]\\
			&=\mathbb{E}[\mathbb{E}[g(\zeta^T_u)\vert\zeta^T_t]G(\zeta^T_t)\mathbb{I}_{\{t<\nu\}}]\mathbb{E}[L(\alpha_1,\ldots,\alpha_n)]\\
			&=\mathbb{E}[\mathbb{E}[g(\zeta^T_u)\vert\zeta^T_t]\mathbb{I}_{\{t<\nu\}}G(\zeta^T_t)L(\alpha_1,\ldots,\alpha_n)]\\
			&=\mathbb{E}[\mathbb{E}[g(\zeta^T_u)\vert\zeta^T_t]\mathbb{I}_{\{t<\nu\}}G(\zeta^T_t)L(\gamma_1,\ldots,\gamma_n)].
		\end{align*}
		This completes the proof.
	\end{proof}
	We  address the Bayesian estimates of $\nu$, $Z_T$, and $\zeta^T_u$ given the observation of the market information process $\zeta^T$ up to time $t<u$.
	\begin{proposition}\label{propconditional}
		For all $0<t<u<T$ and for every measurable function $g$ on $(0, T] \times \mathbb{R} \times \mathbb{R}$ such that $g\left(\nu, Z_T, \zeta^T_u\right)$ is integrable, we have, $\mathbb{P}$-a.s.,
		\begin{multline}
			\mathbb{E}[g(\nu,Z_T,\zeta^T_u)\vert\mathcal{F}_t^{\zeta^T}]=g(\nu,Z_T,\sigma Z_T)\mathbb{I}_{\{\nu\leq t\}}+\sum\limits _{i=1}^{l}\bigg[\dint_{t}^{u}g(r,z_i,\sigma z_i)\phi_{t}^{r,z_i}(\zeta^T_t)f(r,z_i)\mathrm{d}r\\
			+\dint_{u}^{T}\dint_{\mathbb{R}}g(r,z_i,y)p_{t,u}^{r,z_i}(\zeta^T_t,y)\mathrm{d}y\phi_{t}^{r,z_i}(\zeta^T_t)f(r,z_i)\mathrm{d} r\\+\dint_{\mathbb{R}}g(T,z_i,y)p_{t,u}^{T,z_i}(\zeta_t^T,y)\mathrm{d}y\,\phi_{\zeta_{t}^{T,z_i}}(\zeta_t^T)F(T,z_i)\bigg]p_i\mathbb{I}_{\{t<\nu\}},\label{eqlawoftauZgivenxi_t}
		\end{multline}
		where \begin{equation}
			F(T,z_i)=\displaystyle\int_{T}^{+\infty}f(r,z_i)\mathrm{d}r,
		\end{equation}
		\begin{align}
			\phi_{t}^{r,z}(x)
			&=\dfrac{\varphi_{t}^{r,z}(x)}{\sum\limits _{i=1}^{l}\Big[\int_{t}^{T}\varphi_{t}^{r,z_i}(\zeta^T_t)f(r,z_i)\mathrm{d}r+F(T,z_i)\varphi_{t}^{T,z_i}(\zeta^T_t)\Big]p_i}\mathbb{I}_{\{ t<r\}},
		\end{align}
		\begin{equation}
			\varphi_{t}^{r,z}(x)=p\bigg(\dfrac{t(r-t)}{r},x,\dfrac{t}{r}\sigma z\bigg) \,\,0<t<r,\label{eqvarphidensity}
		\end{equation}
		and 
		\begin{equation}
			p_{t,u}^{r,z}(x,y)=p\bigg(\dfrac{r-u}{r-t}(u-t),y,\dfrac{r-u}{r-t}x+\dfrac{u-t}{r-t}\sigma z\bigg)\mathrm{d}y,\,\,0<t<u<r.\label{eqtransitionp_{t,u}}
		\end{equation}
	\end{proposition}
	\begin{proof}
		On the set $\{\nu\leq t\}$ the statement is a consequence of the fact that $\nu$
		is an $\mathcal{F}_t^{\zeta^T}$-stopping time and that $\zeta^T_u=\zeta^T_t=\sigma Z_T$ on $\{\nu\leq t<u\}$. On the other hand, using the fact that $\zeta^T_{\nu}=\sigma Z_T$, we have $g\left(\nu, Z_T, \zeta^T_u\right)\mathbb{I}_{\{ t<\nu \}}$ is measurable with respect to $\sigma(\zeta^T_s, s\geq t)\vee \mathcal{N}_{\mathbb{P}}$. Thus, the Markov property yields
		\begin{equation}
			\mathbb{E}[g(\nu,Z_T,\zeta^T_u)\mathbb{I}_{\{ t<\nu \}}\vert\mathcal{F}_t^{\zeta^T}]=\mathbb{E}[g(\nu,Z_T,\zeta^T_u)\mathbb{I}_{\{ t<\nu \}}\vert\zeta^T_t].\label{eqEgnuZTzetat<nu}
		\end{equation}
		Therefore, we only need to compute the conditional law of $(\nu,Z_T,\zeta^T_u)$ with respect to $\zeta^T_t$. To do so, let us first compute the conditional law of $(\nu,Z_T)$ with respect to $\zeta^T_t$. It is not difficult to show that for every $B \in \mathcal{B}(\mathbb{R})$, we have
		\begin{equation}
			\mathbb{P}_{\zeta_t^T\vert\nu=r,Z_T=z}(B)=\dint_{B}q_t(r,z,x)\mu(\mathrm{d}x)
		\end{equation}
		where
		\begin{equation}
			q_t(x,r,z)=\sum\limits _{i=1}^l\left(\mathbb{I}_{\{x=\sigma z_i\}}\;\mathbb{I}_{\{z=z_i\}}\;\mathbb{I}_{\{r\leq t\}}\right)+\varphi_{t}^{r,z}(x)\;\prod\limits _{i= 1}^l\mathbb{I}_{\{x\neq \sigma z_i\}}\mathbb{I}_{\{ t<r\}}\label{eqq_t(r,x,z)}
		\end{equation}
		and
		\begin{equation}
			\mu(\mathrm{d}x)=\mathrm{d}x+\sum\limits _{i= 1}^l\delta_{\sigma z_i}(\mathrm{d}x).
		\end{equation}
		A direct application of Bayes formula yields that for every measurable function $g$ on $(0, T] \times \mathbb{R}$ such that $g\left(\nu, Z_T\right)$ is integrable, we have, $\mathbb{P}$-a.s.,
		\begin{small}
			\begin{multline}
				\mathbb{E}[g(\nu,Z_T)\vert\zeta^T_{t}]=\dfrac{\dint_{\mathbb{R}}\dint_{0}^{T}g(r,z)q_{t}(\zeta_t^T,r,z)\mathbb{P}_{(\nu,Z_T)}(\mathrm{d}r,\mathrm{d}z)}{\dint_{\mathbb{R}}\dint_{0}^{T}q_{t}(\zeta_t^T,r,z)\mathbb{P}_{(\nu,Z_T)}(\mathrm{d}r,\mathrm{d}z)}=\sum\limits _{i= 1}^l\dfrac{\dint_{0}^{t}g(r,z_i)\;\mathbb{P}_{\nu\vert Z_T=z_i}(\mathrm{d} r)}{\mathbb{P}(\nu\leq t\vert Z_T=z_i)}\;\mathbb{I}_{\{\zeta_t^T=\sigma z_i\}}\\
				+\sum\limits _{i= 1}^l\dfrac{\dint_{t}^{T}g(r,z_i)\varphi_{t}^{r,z_i}(\zeta_t^T)\mathbb{P}_{\nu\vert Z_T=z_i}(\mathrm{d} r)}{\sum\limits _{i=1}^l\dint_{t}^{T}\varphi_{t}^{r,z_i}(\zeta_t^T)\mathbb{P}_{\nu\vert Z_T=z_i}(\mathrm{d}r)p_i}p_i\prod\limits _{i=1}^l\mathbb{I}_{\{\zeta_t^T\neq \sigma z_i\}}.\label{eqbayestau,Zgivenzeta}
			\end{multline}
		\end{small}
		We split the expectation on the right hand side of \eqref{eqEgnuZTzetat<nu} into two expectations as follows:
		\begin{equation}
			\mathbb{E}[g(\nu,Z_T,\zeta^T_u)\mathbb{I}_{\{ t<\nu \}}\vert\zeta^T_t]=\mathbb{E}[g(\nu,Z_T,\sigma Z_T)\mathbb{I}_{\{ t<\nu\leq u \}}\vert\zeta^T_t]+\mathbb{E}[g(\nu,Z_T,\zeta^T_u)\mathbb{I}_{\{ u<\nu \}}\vert\zeta^T_t].
		\end{equation}
		The first expectation can be computed using \eqref{eqbayestau,Zgivenzeta},
		\begin{multline}
			\mathbb{E}[g(\nu,Z_T,\sigma Z_T)\mathbb{I}_{\{ t<\nu\leq u \}}\vert\zeta^T_t]=\sum\limits _{i= 1}^l\dfrac{\dint_{t}^{u}g(r,z_i,\sigma z_i)\varphi_{t}^{r,z_i}(\zeta_t^T)\mathbb{P}_{\nu\vert Z_T=z_i}(\mathrm{d} r)}{\sum\limits _{i=1}^l\dint_{t}^{T}\varphi_{t}^{r,z_i}(\zeta_t^T)\mathbb{P}_{\nu\vert Z_T=z_i}(\mathrm{d}r)p_i}p_i\prod\limits _{k=1}^l\mathbb{I}_{\{\zeta_t^T\neq \sigma z_k\}}.\label{eqeqwithoutcond1}
		\end{multline}
		Concerning the second expectation, we proceed as follows 
		\begin{align}
			\mathbb{E}[g(\nu,Z_T,\zeta_u^T)&\mathbb{I}_{\{ u<\nu \}}\vert\zeta_t^T=x]\nonumber\\
			&=\dint_{\mathbb{R}}\dint_{u}^{T}\mathbb{E}[g(\nu,Z_T,\zeta_u^T)\vert\zeta_t^T=x,\nu=r,Z_T=z]\,\mathbb{P}(\nu \in \mathrm{d}r,Z_T\in \mathrm{d}z|\zeta_t^T=x)\nonumber\\
			&=\dint_{\mathbb{R}}\dint_{u}^{T}\mathbb{E}[g(r,z,W_u^{r,z})|W_t^{r,z}=x]\mathbb{P}(\nu \in \mathrm{d}r,Z_T\in \mathrm{d}z\vert\zeta_t^T=x)\nonumber\\
			&=\sum\limits _{i= 1}^l\dfrac{\dint_{u}^{T}\dint_{\mathbb{R}}g(r,z_i,y)p_{t,u}^{r,z_i}(x,y)\mathrm{d}y\,\varphi_{t}^{r,z_i}(x)\mathbb{P}_{\nu\vert Z_T=z_i}(\mathrm{d} r)}{\sum\limits _{i=1}^l\dint_{t}^{T}\varphi_{t}^{r,z_i}(x)\mathbb{P}_{\nu\vert Z_T=z_i}(\mathrm{d}r)p_i}p_i\prod\limits _{k=1}^l\mathbb{I}_{\{x\neq \sigma z_k\}}\label{eqeqwithoutcond2}
		\end{align}
		On the other hand, since $\nu=\tau\wedge T$, the conditional law $\mathbb{P}_{\nu\vert Z_T=z_i}$ of $\nu$ given $Z_T=z_i$ is expressed as
		\begin{equation}
			\mathbb{P}_{\nu\vert Z_T=z_i}(\mathrm{d}r)=\Big[f(r,z_i)\mathbb{I}_{\{r<T\}}+\displaystyle\int_{T}^{+\infty}f(r,z_i)\mathrm{d}r\,\mathbb{I}_{\{r=T\}}\Big](\mathrm{d}r+\delta_{T}(\mathrm{d}r)).\label{eqlawofnu}
		\end{equation}
		Hence, by combining \eqref{eqlawofnu} with the fact that
		\begin{equation}
			\prod\limits _{k=1}^l\mathbb{I}_{\{\zeta_t^T\neq \sigma z_k\}}=\mathbb{I}_{\{t<\nu\}},
		\end{equation}
		we obtain, $\mathbb{P}$-a.s.,
		\begin{multline}
			\mathbb{E}[g(\nu,Z_T,\sigma Z_T)\mathbb{I}_{\{ t<\nu\leq u \}}\vert\zeta^T_t]=\sum\limits _{i= 1}^l\dint_{t}^{u}g(r,z_i,\sigma z_i)\phi_{\zeta_{t}^{r,z_i}}(\zeta_t^T)f(r,z_i)\mathrm{d} r\,p_i\prod\limits _{k=1}^l\mathbb{I}_{\{\zeta_t^T\neq \sigma z_k\}}\label{eqeq1}
		\end{multline}
		and
		\begin{multline}
			\mathbb{E}[g(\nu,Z_T,\zeta_u^T)\mathbb{I}_{\{ u<\nu \}}\vert\zeta_t^T]=\sum\limits _{i= 1}^l\bigg[\dint_{u}^{T}\dint_{\mathbb{R}}g(r,z_i,y)p_{t,u}^{r,z_i}(\zeta_t^T,y)\mathrm{d}y\,\phi_{\zeta_{t}^{r,z_i}}(\zeta_t^T)f(r,z_i)\mathrm{d}r\\
			+\dint_{\mathbb{R}}g(T,z_i,y)p_{t,u}^{T,z_i}(\zeta_t^T,y)\mathrm{d}y\,\phi_{\zeta_{t}^{T,z_i}}(\zeta_t^T)F(T,z_i)\bigg]p_i\prod\limits _{k=1}^l\mathbb{I}_{\{\zeta_t^T\neq \sigma z_k\}}.\label{eqeq2}
		\end{multline}
		This completes the proof.
	\end{proof}
	\begin{remark}
		Following the same analysis as the proof of Theorem 4.3 in \cite{L}, we may conclude that the market filtration $\mathbb{F}^{\zeta^T}$ satisfies the usual conditions of right-continuity and completeness.
	\end{remark}
	Another important property of the market information process $\zeta^T$ is its semi-martingale decomposition:
	\begin{theorem}\label{thmsemimartingale}
		The market information process $\zeta^T$ is a semi-martingale with respect to $\mathbb{F}^{\zeta^T}$. Moreover, its decomposition is given by
		\begin{equation}
			\zeta^T_{t}=I_t+\sum\limits_{i=1}^{l}\int_{0}^{t} \mathfrak{f}_i(t,\zeta^T_{t})\mathbb{I}_{\{t<\nu\}}\mathrm{d}t,\label{eqdynamicszeta}
		\end{equation}
		where, $I$ is a Brownian motion stopped at $\nu$ and the functions $(\mathfrak{f}_i)_{1\leq i\leq l}$ are given by
		\begin{small}
			\begin{equation}
				\mathfrak{f}_i(t,x)=\dfrac{\dint_{t}^{T}\dfrac{\sigma z_i-x}{r-t}\dfrac{p(r-t,\sigma z_i-x)}{p(r,\sigma z_i)}f(r,z_i)\mathrm{d}r+F(T,z_i)\dfrac{\sigma z_i-x}{T-t}\dfrac{p(T-t,\sigma z_i-x)}{p(T,\sigma z_i)}}{\displaystyle\sum\limits _{i=1}^{l}\bigg[\dint_{t}^{T}\dfrac{p(r-t,\sigma z_i-x)}{p(r,\sigma z_i)}f(r,z_i)\mathrm{d}r+F(T,z_i)\dfrac{p(T-t,\sigma z_i-x)}{p(T,\sigma z_i)}\bigg]p_i}\,p_i.\label{eqmathfrakf_i}
			\end{equation}
		\end{small}
	\end{theorem}
	\begin{proof}
		The proof of the current theorem closely parallels the method used in the proof of Theorem 4.9 in \cite{L}.
	\end{proof}
	\section{Price process}
	\quad\,\,In this section, we derive an expression for the value process of a contract that delivers the cash flow $Z_T$ at time $T$, we provide estimates of the a priori unknown bankruptcy time, based on the observations of the market information process $\zeta^T$ up to time $t$. Subsequently, we analyze the dynamics of the price process and demonstrate that  the price process satisfies a diffusion equation.
	\begin{proposition}
		The price of the cash flow $Z_T$ is given by
		\begin{small}
			\begin{align}
				&B_t^T=P_t^T\,\mathbb{E}[Z_T\vert \mathcal{F}_t^{\zeta^T}]=\exp\Big(-\int_t^Tr(u)\mathrm{d}u\Big)\,Z_T\,\mathbb{I}_{\{\nu\leq t\}}\nonumber\\&+\exp\Big(-\int_t^Tr(u)\mathrm{d}u\Big)\displaystyle\sum\limits _{i=1}^{l}z_i\dfrac{\displaystyle\int_{t}^{T}\varphi_{t}^{r,z_i}(\zeta^T_t)f(r,z_i)\mathrm{d}r+F(T,z_i)\varphi_{t}^{T,z_i}(\zeta^T_t)}{\displaystyle\sum\limits _{i=1}^{l}\bigg[\displaystyle\int_{t}^{T}\varphi_{t}^{r,z_i}(\zeta^T_t)f(r,z_i)\mathrm{d}r+F(T,z_i)\varphi_{t}^{T,z_i}(\zeta^T_t)\bigg]p_i}p_i\,\mathbb{I}_{\{t<\nu\}}.\label{eqpricingformulla}
			\end{align}
		\end{small}
	\end{proposition}
	\begin{proof}
		The result is a consequence of Proposition \ref{propconditional}.
	\end{proof}
	\begin{proposition}
		Let $0<t<T$, $g: [0, T] \rightarrow \mathbb{R}$ be a Borel function such that $\mathbf{E}[\vert g(\nu) \vert]<+\infty$. Then, $\mathbb{P}$-a.s.,
		\begin{multline}
			\mathbb{E}[g(\nu)\vert \mathcal{F}_t^{\zeta^T}]=g(\nu)\,\mathbb{I}_{\{\nu\leq t\}}\\+\displaystyle\sum\limits _{i=1}^{l}\dfrac{\displaystyle\int_{t}^{T}g(r)\varphi_{t}^{r,z_i}(\zeta^T_t)f(r,z_i)\mathrm{d}r+F(T,z_i)g(T)\varphi_{t}^{T,z_i}(\zeta^T_t)}{\displaystyle\sum\limits _{i=1}^{l}\bigg[\displaystyle\int_{t}^{T}\varphi_{t}^{r,z_i}(\zeta^T_t)f(r,z_i)\mathrm{d}r+F(T,z_i)\varphi_{t}^{T,z_i}(\zeta^T_t)\bigg]\,p_i}\,p_i\,\mathbb{I}_{\{t<\nu\}}.\label{eqnugivenFt}
		\end{multline}
	\end{proposition}
	\begin{proof}
		The result follows from the implications of Proposition \ref{propconditional}.
	\end{proof}
	In the following result, we consider the problem of pricing a European option on the price $B_{t}^T$ at time $t<T$.
	\begin{proposition}
		For a strike price $K$, the price at time $0$ of a $t$-maturity call option on $B_{t}^T$ is given by
		\begin{multline}
			C_{0}^{t}=P_{0}^{t}\, \mathbb{E}[\left(B_{t}^{T}-K\right)^{+}]=P_0^t\displaystyle\sum\limits _{i=1}^{l} (P_t^Tz_i-K)^+\int_{0}^{t}f(r,z_i)\mathrm{d}r\,p_i\\
			+P_0^t\,\dint_{\mathbb{R}}\bigg(\displaystyle\sum\limits _{i=1}^{l}(P_t^T\,z_i-K)\bigg[\displaystyle\int_{t}^{T}\varphi_{t}^{r,z_i}(x)f(r,z_i)\mathrm{d}r+F(T,z_i)\varphi_{t}^{T,z_i}(x)\bigg]\,p_i\bigg)^+\mathrm{d}x.
		\end{multline}
	\end{proposition}
	\begin{proof}
		We have
		\begin{equation}
			C_{0}^{t}=P_{0}^{t}\, \mathbb{E}[\left(B_{t}^{T}-K\right)^{+}].\label{eqC0}
		\end{equation}
		We split the above expectation as follows
		\begin{equation}
			\mathbb{E}[(B_t^T-K)^+]=\mathbb{E}[(B_t^T-K)^+\mathbb{I}_{\{ \nu\leq t\}}]+\mathbb{E}[(B_t^T-K)^+\mathbb{I}_{\{t< \nu\}}].\label{eqC0tauHTsplit}
		\end{equation}
		For the first expectation, using \eqref{eqpricingformulla} we obtain
		\begin{align}
			\mathbb{E}[(B_t^T-K)^+\mathbb{I}_{\{ \nu\leq t\}}]&=\mathbb{E}[(P_t^T\,Z_T-K)^+\mathbb{I}_{\{ \tau\leq t\}}]\nonumber\\
			&=\displaystyle\sum\limits _{i=1}^{l} (P_t^Tz_i-K)^+\mathbb{P}(\tau\leq t\vert Z_T=z_i)\,p_i\nonumber\\
			&=\displaystyle\sum\limits _{i=1}^{l} (P_t^Tz_i-K)^+\int_{0}^{t}f(r,z_i)\mathrm{d}r\,p_i.\label{eqexpectation1}
		\end{align}
		For the second expectation, using again \eqref{eqpricingformulla} and the formula of total probability we obtain
		\begin{align}
			\mathbb{E}[(B_t^T&-K)^+\mathbb{I}_{\{t< \nu\}}]\nonumber\\&=\mathbb{E}\bigg[\bigg(\displaystyle\sum\limits _{i=1}^{l}(P_t^T\,z_i-K)\dfrac{\displaystyle\int_{t}^{T}\varphi_{t}^{r,z_i}(\zeta^T_t)f(r,z_i)\mathrm{d}r+F(T,z_i)\varphi_{t}^{T,z_i}(\zeta^T_t)}{\displaystyle\sum\limits _{i=1}^{l}\bigg[\displaystyle\int_{t}^{T}\varphi_{t}^{r,z_i}(\zeta^T_t)f(r,z_i)\mathrm{d}r+F(T,z_i)\varphi_{t}^{T,z_i}(\zeta^T_t)\bigg]p_i}\,p_i\bigg)^+\mathbb{I}_{\{t< \nu\}}\bigg]\nonumber\\
			&=\dint_{\mathbb{R}}\bigg(\displaystyle\sum\limits _{i=1}^{l}(P_t^T\,z_i-K)\bigg[\displaystyle\int_{t}^{T}\varphi_{t}^{r,z_i}(x)f(r,z_i)\mathrm{d}r+F(T,z_i)\varphi_{t}^{T,z_i}(x)\bigg]\,p_i\bigg)^+\mathrm{d}x.\label{eqexpectation2}
		\end{align}
		This completes the proof.
	\end{proof}
	In the following result, we provide the dynamics of the price process $B^T=(B_{t}^T, t\leq T)$.
	\begin{proposition}\label{propdynamicofZ}
		For $0<t < \nu$, the dynamics of the price process $B^T$ are given by
		\begin{equation}
			\mathrm{d}B_{t}^T=\sum\limits_{i=1}^l(P_t^Tz_i-B_{t}^T)\mathfrak{f}_i(t,\zeta^T_{t})\mathrm{d}I_t+r_tB_{t}^T\mathrm{d}t.\label{eqpricedynamic}
		\end{equation}
		For $t\geq \nu$, $B_{t}^T=P_t^TZ_T$, and for $t=0$, $B_{0}^T=P_0^T\sum\limits _{i=1}^{l}\,z_i\,p_i$. Here, $I$ is an $\mathbb{F}^{\zeta^T}$-Brownian motion stopped at $\nu$ and the functions $(\mathfrak{f}_i)_{1\leq i\leq l}$ are defined in \eqref{eqmathfrakf_i}.
	\end{proposition}
	\begin{proof}
		The proof is based on It\^o's Formula. We consider the functions 
		\begin{equation*}
			t\in (0, T)\longrightarrow \bar{f}_i(t,x):=\dint_{0}^{T}\dfrac{p(r-t,\sigma z_i-x)}{p(r,\sigma z_i)}f(r,z_i)\mathbb{I}_{\{t<r\}}\mathrm{d}r\,p_i,\,\,1\leq i\leq l.
		\end{equation*}
		We have for all $t> 0$,
		\begin{align*}
			\dint_{\mathbb{R}}\bar{f}_i(t,x)p(t,x)\mathrm{d}x&=\dint_{t}^{T}\dint_{\mathbb{R}}\dfrac{p(r-t,\sigma z_i-x)p(t,x)}{p(r,\sigma z_i)}\mathrm{d}xf(r,z_i)\mathrm{d}r\,p_i\\
			&=\dint_{t}^{T}\dint_{\mathbb{R}}\varphi_{t}^{r,z_i}(x)\mathrm{d}xf(r,z_i)\mathrm{d}r\,p_i\\
			&=\dint_{t}^{T}f(r,z_i)\mathrm{d}r\,p_i\leq 1.
		\end{align*}
		Hence,
		\begin{equation}
			\dint_{0}^{T}\dfrac{p(r-t,\sigma z_i-x)}{p(r,\sigma z_i)}f(r,z_i)\mathbb{I}_{\{t<r\}}\mathrm{d}r\,p_i<+\infty,\,\,\text{for}\,\text{a.e.}\,x.\label{eqintegrable}
		\end{equation}
		On the other hand, if $x\neq \sigma z_i$, it is easy to see that the function
		\begin{equation*}
			t\in (0, T)\longrightarrow f_i(t,x,r):=\dfrac{p(r-t,\sigma z_i-x)}{p(r,\sigma z_i)}\mathbb{I}_{\{t<r\}}\,p_i
		\end{equation*}
		is continuous on $(0, T)$ and
		\begin{equation*}
			\lim\limits_{h\uparrow 0}\dfrac{1}{h}\dfrac{p(-h,\sigma z_i-x)}{p(r,\sigma z_i)}=0,
		\end{equation*}
		which implies that if $x\neq \sigma z_i$, the function $t\longrightarrow f_i(t,x,r)$ is differentiable. Under the same condition, $x\neq \sigma z_i$, we have 
		\begin{equation}
			t\longrightarrow\partial_tf_i(t,x,r)=\dfrac{1}{2}\bigg(\dfrac{1}{r-t}-\dfrac{(\sigma z_i-x)^2}{(r-t)^2}\bigg)f_i(t,x,r)\label{eqpartialtexpression}
		\end{equation}
		is continuous on $(0, +\infty)$ and the function $r\longrightarrow\partial_tf_i(t,x,r)$	is continuous on $[0, T]$. This implies that for $x\neq \sigma z_i$, we have
		\begin{equation}
			\partial_t\bar{f}_i(t,x)=\dint_{0}^{T}\partial_tf_i(t,x,r)f(r,z_i)\mathrm{d}r\,p_i.\label{eqpartialt}
		\end{equation}
		Similarly, using the fact that $x\longrightarrow f_i(t,x,r)$ is differentiable on $\mathbb{R}$, and that the functions 
		\begin{equation}
			x\longrightarrow\partial_xf_i(t,x,r)=\dfrac{\sigma z_i-x}{r-t}f_i(t,x,r)\label{eqpartialxexpression}
		\end{equation}
		and $r\longrightarrow\partial_xf_i(t,x,r)$ are continuous on $\mathbb{R}$ and $[0, T]$, respectively, we obtain that 
		\begin{equation}
			\partial_x\bar{f}_i(t,x)=\dint_{0}^{T}\partial_xf_i(t,x,r)f(r,z_i)\mathrm{d}r\,p_i.\label{eqpartailx}
		\end{equation}
		Similarly, we obtain 
		\begin{equation}
			\partial^2_x\bar{f}_i(t,x)=\dint_{0}^{T}\partial^2_xf_i(t,x,r)f(r,z_i)\mathrm{d}r\,p_i\label{eqpartialxx}
		\end{equation}
		due to the fact that $x\longrightarrow\partial_xf_i(t,x,r)$ is differentiable on $\mathbb{R}$, and that the functions 
		\begin{equation}
			x\longrightarrow\partial^2_xf_i(t,x,r)=-\bigg(\dfrac{1}{r-t}-\dfrac{(\sigma z_i-x)^2}{(r-t)^2}\bigg)f_i(t,x,r),\label{eqpartialxxexpression}
		\end{equation}
		and $r\longrightarrow\partial^2_xf_i(t,x,r)$ are continuous on $\mathbb{R}$ and $[0, T]$, respectively. From \eqref{eqpricingformulla}, we have
		\begin{align}
			B_{t}^T&=P_t^T\sum\limits _{i=1}^{l}\,z_i\mathfrak{g}_i(t,\zeta^T_{t})\,\mathbb{I}_{\{t<\nu\}}+P_t^T\,Z_T\,\mathbb{I}_{\{\nu\leq t\}},
		\end{align}
		where,
		\begin{equation}
			\mathfrak{g}_i(t,x)=\dfrac{\bar{f}_i(t,x)+F(T,z_i)f_i(t,x,T)}{\displaystyle\sum\limits _{i=1}^{l}\Big[\bar{f}_i(t,x)+F(T,z_i)f_i(t,x,T)\Big]}.
		\end{equation}
		It follows from Theorem \ref{thmsemimartingale} that $\zeta^T$ has the following dynamics
		\begin{equation}
			\mathrm{d}\zeta^T_{t}=\mathrm{d}I_t+\sum\limits_{i=1}^{l} \mathfrak{f}_i(t,\zeta^T_{t})\mathbb{I}_{\{t<\nu\}}\mathrm{d}t,\label{eqdynamicszeta}
		\end{equation}
		observing that $\zeta^T_{t\wedge \nu}=\zeta^T_{t}$ and that for $0<t<\nu$ we have
		\begin{align}
			B_{t}^T&=P_t^T\sum\limits _{i=1}^{l}\,z_i\mathfrak{g}_i(t,\zeta^T_{t}).\label{eqB_tT}
		\end{align}
		It follows from \eqref{eqpartialt} that for $x\notin\{\sigma z_1, \ldots, \sigma z_n \}$
		\begin{multline}
			\partial_t\mathfrak{g}_i(t,x)=\dfrac{\partial_t\bar{f}_i(t,x)+F(T,z_i)\partial_tf_i(t,x,T)}{\displaystyle\sum\limits _{i=1}^{l}\Big[\bar{f}_i(t,x)+F(T,z_i)f_i(t,x,T)\Big]}\\-\mathfrak{g}_i(t,x)\displaystyle\sum\limits _{j=1}^{l}\dfrac{\partial_t\bar{f}_j(t,x)+F(T,z_i)\partial_tf_j(t,x,T)}{\displaystyle\sum\limits_{i=1}^{l}\Big[\bar{f}_i(t,x)+F(T,z_i)f_i(t,x,T)\Big]},
		\end{multline}
		using \eqref{eqpartailx} we have
		\begin{multline}
			\partial_x\mathfrak{g}_i(t,x)=\dfrac{\partial_x\bar{f}_i(t,x)+F(T,z_i)\partial_xf_i(t,x,T)}{\displaystyle\sum\limits _{i=1}^{l}\Big[\bar{f}_i(t,x)+F(T,z_i)f_i(t,x,T)\Big]}\\-\mathfrak{g}_i(t,x)\displaystyle\sum\limits _{j=1}^{l}\dfrac{\partial_x\bar{f}_j(t,x)+F(T,z_i)\partial_xf_j(t,x,T)}{\displaystyle\sum\limits _{i=1}^{l}\Big[\bar{f}_i(t,x)+F(T,z_i)f_i(t,x,T)\Big]}.
		\end{multline}
		Hence,
		\begin{equation}
			\partial_x\mathfrak{g}_i(t,x)=\mathfrak{f}_i(t,x)-\mathfrak{g}_i(t,x)\sum\limits _{j=1}^{l}\mathfrak{f}_j(t,x).\label{eqpartialxf^1}
		\end{equation}
		From \eqref{eqpartialxx} and \eqref{eqpartailx} we have
		\begin{align}
			\partial_x\mathfrak{f}_i(t,x)&=\dfrac{\partial_x^2\bar{f}_i(t,x)+F(T,z_i)\partial_x^2f_i(t,x,T)}{\displaystyle\sum\limits _{i=1}^{l}\Big[\bar{f}_i(t,x)+F(T,z_i)f_i(t,x,T)\Big]}-\mathfrak{f}_i(t,x)\sum\limits _{j=1}^{l}\mathfrak{f}_j(t,x),
		\end{align}
		thus, \eqref{eqpartialxf^1} implies that
		\begin{align}
			&\partial^2_x\mathfrak{g}_i(t,x)=\dfrac{\partial_x^2\bar{f}_i(t,x)+F(T,z_i)\partial_x^2f_i(t,x,T)}{\displaystyle\sum\limits _{i=1}^{l}\Big[\bar{f}_i(t,x)+F(T,z_i)f_i(t,x,T)\Big]}-2\mathfrak{f}_i(t,x)\sum\limits _{j=1}^{l}\mathfrak{f}_j(t,x)\nonumber\\
			&-\mathfrak{g}_i(t,x)\displaystyle\sum\limits _{i=1}^{l}\dfrac{\partial_x^2\bar{f}_i(t,x)+F(T,z_i)\partial_x^2f_i(t,x,T)}{\displaystyle\sum\limits _{i=1}^{l}\Big[\bar{f}_i(t,x)+F(T,z_i)f_i(t,x,T)\Big]}+2\mathfrak{g}_i(t,x)\bigg[\sum\limits _{j=1}^{l}\mathfrak{f}_j(t,x)\bigg]^2.\label{eqpartialxxf^1}
		\end{align}
		From \eqref{eqpartialtexpression} and \eqref{eqpartialxxexpression} we observe that 
		\begin{equation}
			\partial^2_xf_i(t,x,T)=-2\partial_tf_i(t,x,T),\label{erpartailxx=partialt}
		\end{equation}
		and 
		\begin{equation}
			\partial^2_x\bar{f}_i(t,x)=-2\partial_t\bar{f}_i(t,x),\label{erpartailxx=partialt}
		\end{equation}
		then \eqref{eqpartialxxf^1} becomes
		\begin{equation}
			\partial^2_x\mathfrak{g}_i(t,x)=-2\partial_t\mathfrak{g}_i(t,x)-2\mathfrak{f}_i(t,x)\sum\limits _{j=1}^{l}\mathfrak{f}_j(t,x)+2\mathfrak{g}_i(t,x)\bigg[\sum\limits _{j=1}^{l}\mathfrak{f}_j(t,x)\bigg]^2.\label{eqpartialxxf^1t}
		\end{equation}
		Note that, $\mathbb{P}$-a.s.,
		\begin{equation}
			\{t<\nu\}=\bigcap\limits_{1\leq i \leq l}\{\zeta^T_t\neq \sigma z_i\},\label{eqzeta^TneqsigmaZ}
		\end{equation}
		which implies $\zeta^T_{t}\neq \sigma z_i$ for all $i\in\{1,\ldots,l\}$ on the set $\{t<\nu\}$. Therefore, by applying the It\^o formula, we deduce that for $t<\nu$, we have
		\begin{equation}
			\mathrm{d}\mathfrak{g}_i(t,\zeta^T_{t})=\bigg[\partial_t\mathfrak{g}_i(t,\zeta^T_{t})+\partial_x\mathfrak{g}_i(t,\zeta^T_{t})\sum\limits _{j=1}^{l}\mathfrak{f}_j(t,\zeta^T_{t})+\dfrac{1}{2}\partial^2_x\mathfrak{g}_i(t,\zeta^T_{t})\bigg]\mathrm{d}t+\partial_x\mathfrak{g}_i(t,\zeta^T_{t})\mathrm{d}I_t.\label{eqIto}
		\end{equation}
		Inserting \eqref{eqpartialxf^1} and \eqref{eqpartialxxf^1t} into \eqref{eqIto} we obtain that 
		\begin{equation}
			\mathrm{d}\mathfrak{g}_i(t,\zeta^T_{t})=\bigg[\mathfrak{f}_i(t,\zeta^T_{t})-\mathfrak{g}_i(t,\zeta^T_{t})\sum\limits _{j=1}^{l}\mathfrak{f}_j(t,\zeta^T_{t})\bigg]\mathrm{d}I_t.\label{eqdynamicf^1}
		\end{equation}
		We conclude from \eqref{eqB_tT} that for $t<\nu$, we have 
		\begin{align}
			\mathrm{d}B_{t}^T&=P_t^T\sum\limits _{i=1}^{l}\,z_i\mathrm{d}\mathfrak{g}_i(t,\zeta^T_{t})+r_t\,P_t^T\sum\limits _{i=1}^{l}\,z_i\mathfrak{g}_i(t,\zeta^T_{t})\mathrm{d}t\nonumber\\
			&=\sum\limits_{i=1}^l(P_t^T\,z_i-B_{t}^T)\mathfrak{f}_i(t,\zeta^T_{t})\mathrm{d}I_t+r_t\,B_t^T\mathrm{d}t.
		\end{align}
		It follows from \eqref{eqpricingformulla} that for $\nu\leq t$ we have $B_{t}^T=P_t^T\,Z_T$ and for $t=0$, we have $B_{t}^T=P_t^T\sum\limits _{i=1}^{l}\,z_i\,p_i$.
	\end{proof}
	\section{Compensator of certain special processes with respect to $\mathbb{F}^{\zeta^T}$}
	\label{sect3}
	\quad\,\,Certainly, market agents prioritize obtaining as much information as possible about the bankruptcy time $\tau$. However, to compile certain facts about the bankruptcy time $\tau$, it is crucial to determine the nature of the stopping time $\nu=\tau\wedge T$. Market agents can anticipate the occurrence of bankruptcy if $\nu$ is predictable, whereas bankruptcy takes place unexpectedly if $\nu$ is totally inaccessible. In this section, to gain deeper insights into $\nu$, we explicitly compute the compensator of $\nu$ and leverage a well-known equivalence between the categories of stopping times and the regularity of their compensators.
	
	Consider a finite variation process $V$ with $V_0 = 0$ and locally integrable total variation. The compensator of $V$ is a unique finite variation predictable process, denoted as $A$, such that the process $V-A$ is a local martingale. If $V$ is an increasing process, it is a sub-martingale. Consequently, according to the Doob-Meyer theorem, the compensator $A$ is also increasing. Moreover, when $V$ is a càdlàg, adapted, and locally integrable increasing process, its compensator $A$ remains increasing and, additionally, for any stopping time $S$ and non-negative predictable processes $U$, the following equality holds:
	\begin{equation}
		\mathbb{E}\bigg[\displaystyle\int_0^{S}U_s\mathrm{d}V_s \bigg]=\mathbb{E}\bigg[\displaystyle\int_0^{S}U_s\mathrm{d}A_s \bigg].
	\end{equation}
	Furthermore, the compensator $A$ of $V$ is the unique right-continuous predictable and increasing process with $A_0=0$ which satisfies
	\begin{equation}
		\mathbb{E}\bigg[\displaystyle\int_0^{\infty}U_s\mathrm{d}V_s \bigg]=\mathbb{E}\bigg[\displaystyle\int_0^{\infty}U_s\mathrm{d}A_s \bigg]
	\end{equation}
	for all non-negative predictable $U$. See for instance \cite{M} and \cite{N}. This section aims to give the explicit computation of the compensator of the random time $\nu$, that is, the compensator of the $\mathbb{F}^{\zeta^T}$-sub-martingale:
	\begin{equation}
		H_t=\mathbb{I}_{\{ \nu\leq t \}},\,\, t\geq 0,
	\end{equation}
	which is defined as the unique adapted, natural, increasing, integrable process $K$ satisfying
	\begin{equation}
		H=N+K,\label{eqcompensator}
	\end{equation}
	Here $N$ represents a right-continuous martingale. In our analysis, we employ the methodology developed by P.A. Meyer \cite{M} for computing the compensator of a sub-martingale. This method is then adapted to determine the compensator of the process $H$. For the reader's convenience, we revisit pertinent materials and definitions outlined in \cite{M}. 
	Consider a filtration $\mathbb{F}=(\mathcal{F}_t)_{t\geq 0}$ satisfying the usual hypothesis of right-continuity and completeness. Let $X$ be a right-continuous $\mathbb{F}$-super-martingale, and denote $\mathcal{T}$ as the collection of all finite $\mathbb{F}$-stopping times relative to this family. We classify the process $X$ as belonging to the class (D) if the collection of random variables $(X_S, S\in \mathcal{T})$ is uniformly integrable. Moreover, we define the right-continuous super-martingale $X$ as a potential if the random variables $X_t$ are non-negative and if
	\begin{equation}
		\lim\limits_{t \rightarrow +\infty}\mathbb{E}[X_t]=0.
	\end{equation}
	If $C = (C_t, t \geq 0)$ is an integrable increasing process, and $L = (L_t, t \geq 0)$ represents the right-continuous modification of the martingale $(\mathbb{E}[C_{\infty}\vert \mathcal{F}_t], t \geq 0)$, then the process $Y=(Y_t, t \geq 0)$, defined by
	\begin{equation}
		Y_t= L_t - C_t
	\end{equation}
	This quantity is denoted as the potential generated by $C$. The methodology employed in this context relies on the convergence in the weak topology $\sigma(L^1,L^{\infty})$. It's valuable to recall the definition of this convergence. Let $(Y_n){n\in \mathbb{N}}$ be a sequence of integrable real-valued random variables. The sequence $(Y_n)_{n\in \mathbb{N}}$ is said to converge to an integrable random variable $Y$ in the weak topology $\sigma(L^1,L^{\infty})$ if 
	\begin{equation}
		\lim\limits_{n\rightarrow +\infty}\mathbb{E}[Y_n\eta]=\mathbb{E}[Y\eta],\,\,\text{for all}\, \eta\in L^{\infty}(\mathbb{P}).
	\end{equation}
	In the subsequent discussion, we delve into the representation of the compensator of $H$ which notably involves the local time of $\zeta^T$. Therefore, before presenting the main result of this section, we establish specific properties of the local time. It is a well-known that for a continuous semi-martingale, the local time can be defined. Tanaka's formula, in particular, provides a definition of local time for arbitrary continuous semi-martingales. Since our process $\zeta^T$ is a continuous semi-martingale, the local time $(L^{\zeta^T}(t,x), t\geq 0)$ of $\zeta^T$ at level $x\in \mathbb{R}$ is well-defined.\\
	In the following proposition, we address the continuity and boundedness of the local time $L^{\zeta^T}(t,x)$.
	\begin{proposition}\label{proplocaltime}
		Let $x\in \mathbb{R}$, and $(L^{\zeta^T}(t,x), t\geq 0)$ be the local time of $\zeta^T$ at level $x$. We have:
		\begin{enumerate}
			\item[(i)] There is a version of $L^{\zeta^T}(t,x)$ such that the map $(t,x)\in\mathbb{R}_{+}\times \mathbb{R}\rightarrow L^{\zeta^T}(t,x)$ is continuous, $\mathbb{P}$-a.s.
			\item[(ii)] For every continuous function $g$ on $\mathbb{R}$, the function $x\in\mathbb{R}\rightarrow g(x)\,L^{\zeta^T}(t,x)$ is bounded for all $t\geq 0$, $\mathbb{P}$-a.s.~ (the bound may depend on $t$ and $\omega$).
			\item[(iii)] Let $(x_n)_{n\in \mathbb{N}}$ be a sequence in $\mathbb{R}$ converging to $x \in \mathbb{R}$. The sequence $(L^{\zeta^T}(.,x_n))_{n\in \mathbb{N}}$ converges
			weakly to $L^{\zeta^T}(.,x)$, that is,
			\begin{equation}
				\lim\limits_{n\rightarrow +\infty}\displaystyle\int_{0}^{T}h(t)\mathrm{d}L^{\zeta^T}(t,x_n)=\displaystyle\int_{0}^{T}h(t)\mathrm{d}L^{\zeta^T}(t,x)
			\end{equation}
			for all bounded and continuous functions $h:[0, T]\longrightarrow \mathbb{R}$.
		\end{enumerate}
	\end{proposition}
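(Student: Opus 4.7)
The plan is to treat the three items in order, using as the central tool the explicit continuous semimartingale decomposition of $\xi$ from the previous theorem together with the classical theory of local times of continuous semimartingales (as in Revuz--Yor, Chapter VI).

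For part (i), I would first recall the general fact that any continuous semimartingale $X=M+V$ admits a version of its local time $(t,x)\mapsto L^{X}(t,x)$ which is jointly measurable, continuous in $t$, c\`adl\`ag in $x$, and whose jumps in $x$ satisfy
$$L^{\xi}(t,x)-L^{\xi}(t,x-)=2\int_{0}^{t}\mathbb{I}_{\{\xi_{s}=x\}}\,\mathrm{d}V_{s}.$$
From the decomposition of $\xi$ recalled in the previous theorem, $V$ is absolutely continuous, $V_{t}=\int_{0}^{t\wedge\tau}b_{s}\,\mathrm{d}s$, with $\int_{0}^{\tau}|b_{s}|\,\mathrm{d}s<\infty$ a.s.\ (otherwise $\xi$ would fail to be a semimartingale). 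For any fixed $x\in\mathbb{R}$, the occupation-times formula applied to $f=\mathbb{I}_{\{y=x\}}$ gives, using $\mathrm{d}\langle\xi\rangle_{s}=\mathbb{I}_{\{s\leq\tau\}}\,\mathrm{d}s$ and Remark \ref{remarkquadratic},
$$\int_{0}^{t\wedge\tau}\mathbb{I}_{\{\xi_{s}=x\}}\,\mathrm{d}s=\int_{\mathbb{R}}\mathbb{I}_{\{y=x\}}L^{\xi}(t\wedge\tau,y)\,\mathrm{d}y=0$$
since the singleton $\{x\}$ is Lebesgue-null. Because $|b_{s}|\,\mathrm{d}s$ is absolutely continuous with respect to $\mathrm{d}s$ on $[0,\tau]$, this upgrades to $\int_{0}^{t\wedge\tau}\mathbb{I}_{\{\xi_{s}=x\}}|b_{s}|\,\mathrm{d}s=0$, so the jump vanishes for every $x$. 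Hence $x\mapsto L^{\xi}(t,x)$ has no discontinuities and the standard Revuz--Yor construction (Kolmogorov-type moment bounds on increments in $x$) delivers a jointly continuous modification.

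For part (ii), path continuity of $\xi$ implies that $M(\omega,t):=\sup_{s\leq t}|\xi_{s}(\omega)|<\infty$ almost surely for each $t$, and Tanaka's formula gives $L^{\xi}(t,x)=0$ whenever $|x|>M(\omega,t)$. Combined with the continuity in $x$ from (i), the map $x\mapsto L^{\xi}(t,x)$ is continuous and compactly supported, hence bounded; multiplication by the continuous function $g$ therefore yields a bounded function of $x$.

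For part (iii), the joint continuity from (i) gives $L^{\xi}(\cdot,x_{n})\to L^{\xi}(\cdot,x)$ uniformly on compact subsets of $\mathbb{R}_{+}$, $\mathbb{P}$-a.s. Moreover, since $\xi$ is constant equal to $Z$ on $[\tau,+\infty)$, Tanaka's formula forces $L^{\xi}(\cdot,y)$ to be constant on $[\tau,+\infty)$ for every $y$, so each Stieltjes measure $\mathrm{d}L^{\xi}(\cdot,x_{n})$ is supported on the a.s.\ bounded random interval $[0,\tau]$. For a bounded continuous $h$, restricting the integrals to $[0,\tau+1]$ and applying the classical convergence theorem for Stieltjes integrals against uniformly convergent continuous increasing functions (Helly's second theorem) yields the claimed convergence. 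The main obstacle is the passage in (i) from the pointwise vanishing of the individual level-$x$ jumps to genuine joint continuity in $(t,x)$, which rests on the moment estimates underlying the Revuz--Yor construction and is the only place where the fine semimartingale structure of $\xi$ really enters; parts (ii) and (iii) then follow from continuity plus the a.s.\ localisation of the local time on $[0,\tau]$.
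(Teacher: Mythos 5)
Your proposal is correct and follows essentially the same route as the paper: part (i) by showing the jump $L^{\xi}(t,x)-L^{\xi}(t,x-)=2\int_0^{t\wedge\tau}\mathbb{I}_{\{\xi_s=x\}}\,\mathrm{d}V_s$ vanishes via the occupation times formula (using $\langle\xi,\xi\rangle_t=t\wedge\tau$), part (ii) from the compact support of $x\mapsto L^{\xi}(t,x)$ on $[-M_t,M_t]$ together with continuity, and part (iii) from pointwise convergence of the distribution functions and of the total masses of the measures supported on $[0,\tau]$. The only cosmetic difference is that the paper keeps the drift inside the occupation-times integral rather than invoking absolute continuity separately, which changes nothing of substance.
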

	\begin{proof}\begin{enumerate}
			\item[(i)] Since the process $\zeta^T$ is a semi-martingale, according to \cite[Theorem 1.7, Ch. IV]{RY}, there exists a modification of the process $(L^{\zeta^T}(t,x), 0\leq t \leq T, x\in \mathbb{R})$ such that the map $(t,x)\in [0, T]\times\mathbb{R}\longrightarrow L^{\zeta^T}(t,x)$ is continuous in $t$ and cad-lag in $x\in \mathbb{R}$. Moreover, the jump size of $L^{\zeta^T}$ in the $x$ variable is given by 
			\begin{equation}
				L^{\zeta^T}(t,x)-L^{\zeta^T}(t,x-)=2\displaystyle\sum\limits_{i=1}^l\displaystyle\int_0^{t\wedge \nu}\mathbb{I}_{\{\zeta^T_s=x\}}\mathfrak{f}_i(s,\zeta^T_{s})\,\mathrm{d}s\,p_i.
			\end{equation}
			From Theorem \ref{thmsemimartingale} we see that $\langle\zeta^T,\zeta^T\rangle_s=\langle I,I\rangle_s=s\wedge \nu$, hence
			\begin{equation}
				L^{\zeta^T}(t,x)-L^{\zeta^T}(t,x-)=2\displaystyle\sum\limits_{i=1}^l\displaystyle\int_0^{t}\mathbb{I}_{\{\zeta^T_s=x\}} \mathfrak{f}_i(s,\zeta^T_{s})\,\mathrm{d}\langle\zeta^T,\zeta^T\rangle_s\,p_i.
			\end{equation}
			Thus, applying the occupation times formula  to the right-hand side of the previous equality, we see that
			\begin{align}
				L^{\zeta^T}(t,x)-L^{\zeta^T}(t,x-)&=2\displaystyle\sum\limits_{i=1}^l\displaystyle\displaystyle\int_{\{x\}}\displaystyle\int_0^{t} \mathfrak{f}_i(s,y)\mathrm{d}L^{\zeta^T}(s,y)\mathrm{d}y=0.
			\end{align}
			Consequently, the map $(t,x)\in [0, T]\times \mathbb{R}\rightarrow L^{\zeta^T}(t,x)$ is continuous, $\mathbb{P}$-a.s.
			\item[(ii)] It follows from \cite[Corollary 1.9, Ch. VI]{RY} that the local time vanishes outside of the compact
			interval $[-M_t(\omega),M_t(\omega)]$, where 
			\begin{equation}
				M_t(\omega):=\sup\limits_{s\in [0,t]}\vert\zeta^T_s(\omega)\vert,\,t\in [0,\,T],\,\omega\in \Omega.\label{eqM_tbound}
			\end{equation}
			Since the function $x\longrightarrow L^{\zeta^T}(t,x)\,g(x)$  is continuous, it is also bounded.
			\item[(iii)] For all $A\in \mathcal{B}(\mathbb{R}_+)$, we have
			\begin{equation}
				L^{\zeta^T}(A,x)=\displaystyle\int_{A}\mathrm{d}L^{\zeta^T}(t,x),
			\end{equation}
			Let $(x_n)_{n\in \mathbb{N}}$ be a sequence in $\mathbb{R}$ converging to $x \in \mathbb{R}$. The measures $(L^{\zeta^T}(.,x_n))_{n\in \mathbb{N}}$ are
			finite on $\mathbb{R}_+$ and they are supported by $[0, \nu]$. By the continuity of $L^{\zeta^T}(t,.)$, we see that $L^{\zeta^T}(t,x_n)$ converges as $n\rightarrow +\infty$ to $L^{\zeta^T}(t,x)$, for all $t\in [0, T]$, from which it follows that
			\begin{equation}
				\lim\limits_{n\rightarrow +\infty}L^{\zeta^T}([0,t],x_n)=L^{\zeta^T}([0,t],x),\,\,t\in [0,\,T].
			\end{equation}
			We also have 
			\begin{equation}
				L^{\zeta^T}([0, T],x_n)=L^{\zeta^T}([0,\nu],x_n)\overset{n\rightarrow \infty}{\longrightarrow}L^{\zeta^T}([0,\nu],x)=L^{\zeta^T}([0, T],x).
			\end{equation}
			Hence, the measures $(L^{\zeta^T}(.,x_n))_{n\in \mathbb{N}}$ converge weakly to $L^{\zeta^T}(.,x)$.
			See, e.g., \cite[Chapter 3, Section 1]{S}.
		\end{enumerate}
	\end{proof}
	Now we are in position to state our main result of this section.
	\begin{theorem}\label{thmcompensator}
		The compensator of $\nu$ with respect to $\mathbb{F}^{\zeta^T}$ is given by
		\begin{equation}
			K_t=\displaystyle\sum\limits_{k=1}^l\displaystyle\int_{0}^{t\wedge \nu}\dfrac{p_k\dfrac{f(s,z_k)}{p(s,\sigma z_k)}}{\displaystyle\sum\limits_{i=1}^l\bigg[\displaystyle\int_{s}^{T}\Phi_{s,r}(z_k,z_i)f(r,z_i)\mathrm{d}r+F(T,z_i)\Phi_{s,T}(z_k,z_i)\bigg]p_i}\mathrm{d}L^{\zeta^T}(s,\sigma z_k),\label{eqcompensatorexpression}
		\end{equation}
		where,
		\begin{equation}
			\Phi_{s,t}(x,y)=\dfrac{p(t-s,\sigma (y-x))}{p(t,\sigma y)},\,\,s<t,\,\,x,\,y\in\mathbb{R}.
		\end{equation}
	\end{theorem}
	\begin{proof}
		The process $H$ is a bounded, non-negative, increasing, adapted process. It is a sub-martingale and the process $G$ given by 
		\begin{equation}
			G_t:=1-H_t=\mathbb{I}_{\{t<\nu\}},\label{eqG}
		\end{equation}
		is a right-continuous potential of class (D) since
		\begin{equation*}
			\lim\limits_{t\rightarrow T}\mathbb{E}[G_t]=\lim\limits_{t\rightarrow T}\mathbb{P}(\nu >t)=0.
		\end{equation*}
		Let us consider the increasing process $A^h=(A^h_t, 0\leq t \leq T)$ defined by 
		\begin{align}
			A^h_t&=\dfrac{1}{h}\displaystyle\int_0^t\mathbb{P}(s<\nu<s+h \vert \mathcal{F}_s^{\zeta^T})\,\mathrm{d}s.\label{eqA^h}
		\end{align} 
		It follows from \cite[VII.T29]{M} that there exists an integrable, natural, increasing process $K'$, which generates $G$, and this process is unique. For every stopping time $S$, $$K'_S=\lim\limits_{h\rightarrow 0}A^h_S$$ in the sense of the weak topology $\sigma(L^1,L^{\infty})$. From the definition of potential generated by an increasing process, we see that the process given by
		\begin{equation}
			L_t:=G_t+K'_t,\,\,\geq 0,\label{eqK'}
		\end{equation}
		is a martingale. By combining \eqref{eqG} and \eqref{eqK'} we obtain the following decomposition of
		$H$:
		$$ H=1-L+K'. $$
		Therefore, by uniqueness of the decomposition \eqref{eqcompensator}, we can identify the martingale $N$
		with $1 - L$ and we have that
		$K = K'$, up to indistinguishability, which implies that $K'$ is the compensator of $H$. Let us now compute its explicit expression: Let $0< t_0 < t <T$, it follows from \eqref{eqnugivenFt} that, $\mathbb{P}$-a.s.,
		\begin{equation}
			A^h_t-A^h_{t_0}=\displaystyle\sum\limits _{i=1}^{l}\displaystyle\int_{t_0\wedge\nu}^{t\wedge\nu}\dfrac{1}{h}\displaystyle\int_{s}^{s+h}p\bigg(\dfrac{s(r-s)}{r},\zeta^T_{s},\sigma\dfrac{s}{r}z_i\bigg)\,f(r,z_i)\mathrm{d}r\,u(s,\zeta^T_s)\,\mathrm{d}s\,p_i
		\end{equation}
		where,
		\begin{equation}
			u(s,x)=\bigg(\displaystyle\sum\limits _{i=1}^{l}\bigg[\displaystyle\int_{s}^{T}\varphi_{s}^{r,z_i}(x)\,f(r,z_i)\mathrm{d}r+F(T,z_i)\varphi_{s}^{T,z_i}(x)\bigg]\,p_i\bigg)^{-1},\,\,0<s<T,\,\,x\in \mathbb{R}.
		\end{equation}
		Later, we shall verify that for all $i\in\{1,\ldots,l\}$
		\begin{equation}
			\lim\limits_{h\downarrow 0}\displaystyle\int_{t_0\wedge\nu}^{t\wedge\nu}\dfrac{1}{h}\displaystyle\int_{s}^{s+h}p\bigg(\dfrac{s(r-s)}{r},\zeta^T_{s},\dfrac{s}{r}\sigma z_i\bigg)\,[f(r,z_i)-f(s,z_i)]\mathrm{d}ru(s,\zeta^T_s)\,\mathrm{d}s=0.\label{eqlimit0}
		\end{equation}
		Hence, we have to deal with the limit behaviour as $h\downarrow 0$ of
		\begin{multline}
			\displaystyle\sum\limits _{i=1}^{l}\displaystyle\int_{t_0\wedge\nu}^{t\wedge\nu}\dfrac{1}{h}\displaystyle\int_{s}^{s+h}p\bigg(\dfrac{s(r-s)}{r},\zeta^T_{s},\dfrac{s}{r}z_i\bigg)\mathrm{d}rf(s,z_i)u(s,\zeta^T_s)\,\mathrm{d}s\,p_i=\\
			\displaystyle\sum\limits _{i=1}^{l}\displaystyle\int_{t_0\wedge\nu}^{t\wedge\nu}\dfrac{1}{h}\displaystyle\int_{0}^{h}p\bigg(\dfrac{sr}{r+s},\zeta^T_{s},\dfrac{s}{r+s}z_i\bigg)\mathrm{d}rf(s,z_i)u(s,\zeta^T_s)\,\mathrm{d}s\,p_i.\label{eqsum}
		\end{multline}
		For all $i\in \{1,\ldots,n\}$, we have the following estimate
		\begin{small}
			\begin{align}
				&\bigg\vert p\bigg(\dfrac{sr}{r+s},x,\dfrac{s}{r+s}\sigma z_i\bigg)-\exp\bigg(\dfrac{\sigma z_i(\sigma z_i-x)}{s}\bigg)p(r,x,\sigma z_i) \bigg\vert\nonumber\\
				\nonumber\\
				=&\exp\bigg(\dfrac{\sigma z_i(\sigma z_i-x)}{s}\bigg)p(r,x,\sigma z_i)\bigg\vert \sqrt{\dfrac{s+r}{s}}\exp\bigg(-\dfrac{(x-\sigma z_i)^2}{2s}-\dfrac{r\sigma^2z_i^2}{2s(s+r)}\bigg)-1 \bigg\vert\nonumber\\
				\nonumber\\
				\leq& \exp\bigg(\dfrac{\sigma z_i(\sigma z_i-x)}{s}\bigg)p(r,x,\sigma z_i)\bigg[ \sqrt{\dfrac{s+r}{s}}\bigg\vert\exp\bigg(-\dfrac{(x-\sigma z_i)^2}{2s}-\dfrac{r\sigma^2z_i^2}{2s(s+r)}\bigg)-1 \bigg\vert+\bigg\vert \sqrt{\dfrac{s+r}{s}}-1\bigg\vert\bigg]\nonumber\\
				\nonumber\\
				\leq& \exp\bigg(\dfrac{\sigma z_i(\sigma z_i-x)}{s}\bigg)\bigg[\dfrac{1}{\sqrt{2\pi}\vert x-\sigma z_i\vert}\exp\bigg(-\dfrac{1}{2}\bigg)\sqrt{\dfrac{s+1}{s}}\dfrac{(x-\sigma z_i)^2}{2s}+\dfrac{1}{\sqrt{2\pi r}}\bigg(\dfrac{r\sigma^2z_i^2}{2s(s+r)}+\dfrac{r}{2s} \bigg) \bigg]\nonumber\\
				\nonumber\\
				\leq&(c_1\vert x-\sigma z_i \vert+c_2\sqrt{r})C(t_0,t,x),\label{eqestimate}
			\end{align}
		\end{small}
		with some constants $c_1$ and $c_2$, for $0 \leq r \leq  h \leq  T$ and $s \in [t_0, t]$, where
		\begin{equation}
			C(t_0,t,x)=\sup\limits_{1\leq i \leq n}\bigg[\exp\bigg(\dfrac{\sigma z_i(\sigma z_i-x)}{t}\bigg)\vee \exp\bigg(\dfrac{\sigma z_i(\sigma z_i-x)}{t_0}\bigg)\bigg].\label{eqC}
		\end{equation}
		Recall that we have $\mathbb{P}(\tau\geq T)=\displaystyle\sum\limits _{i=1}^{l}F(T,z_i)p_i>0$, then, without loss of generality, we can assume that $F(T,z_1)p_1>0$. Thus, for all $x\in \mathbb{R}$ and $s\in [t_0, t]$, we have
		\begin{align*}
			\displaystyle\sum\limits _{i=1}^{l}\bigg[\displaystyle\int_{s}^{T}\varphi_{s}^{r,z_i}(x)\,f(r,z_i)\mathrm{d}r+F(T,z_i)\varphi_{s}^{T,z_i}(x)\bigg]\,p_i&\geq F(T,z_1)\varphi_{s}^{T,z_1}(x)p_1.
		\end{align*}
		Consequently, for all $x\in \mathbb{R}$, we have
		\begin{equation}
			\sup\limits_{s\in [t_0,t]}u(s,x)\leq \dfrac{\sqrt{2\pi (T-t_0)}}{p_1F(T,z_1)} \exp\bigg(\dfrac{(x-\sigma\,z_1)^2}{2(T-t)}+\dfrac{x^2}{2t_0}\bigg)=D(t_0,t,x).\label{eqestimateg(s,x)}
		\end{equation}
		It is a simple matter to check that the functions $C(t_0,t,x)$ and $D(t_0,t,x)$ defined respectively in \eqref{eqC} and \eqref{eqestimateg(s,x)} are continuous in $x$. Hence, \eqref{eqestimate} and \eqref{eqestimateg(s,x)} show that
		for $0 \leq r \leq h \leq T$ and $s \in [t_0, t]$ 
		\begin{align}
			\displaystyle\sum\limits _{i=1}^{l}\dfrac{1}{h}\displaystyle\int_{0}^{h}\bigg\vert p\bigg(&\dfrac{sr}{r+s},\zeta^T_s,\dfrac{s}{r+s}\sigma z_i\bigg)-\exp\bigg(\dfrac{\sigma z_i(\sigma z_i-\zeta^T_s)}{s}\bigg)p(r,\zeta^T_s,\sigma z_i)\bigg\vert\mathrm{d}ru(s,\zeta^T_s)f(s,z_i)\,p_i\nonumber\\
			&\leq\bigg(c_1\vert \zeta^T_s\vert+c_1\mathbb{E}[\vert Z_T\vert]+c_2\bigg)C(t_0,t,\zeta^T_s)D(t_0,t,\zeta^T_s)\sup\limits_{s\in[t_0,t]}f(s,z_i).\label{eqestimatedominatedconvergence}
		\end{align}
		Note that the right-hand side of \eqref{eqestimatedominatedconvergence} is integrable over $[t_0, t]$ with respect to the Lebesgue measure. On the other hand, using the fact that the function 
		\begin{equation*}
			{\displaystyle r\longrightarrow\begin{cases}
					p(r,x,\sigma z), & \text{if}\,\,r\neq 0,\\
					0, &\text{if}\,\, r=0,
			\end{cases}}
		\end{equation*}
		is continuous if $x\neq \sigma z$, by the fundamental theorem of calculus we have for every $x\neq \sigma z$
		\begin{equation}
			\lim\limits_{h \downarrow 0}\dfrac{1}{h}\displaystyle\int_{0}^{h} p\bigg(\dfrac{sr}{r+s},x,\sigma z\bigg)\mathrm{d}r=0,\,\,\,\text{and}\,\,\,\lim\limits_{h \downarrow 0}\dfrac{1}{h}\displaystyle\int_{0}^{h}p(r,x,\sigma z)\mathrm{d}r=0.
		\end{equation} 
		By the occupation time formula we have
		\begin{equation*}
			\displaystyle\int_{0}^{t\wedge\nu}\displaystyle\sum\limits_{i=1}^{l}\mathbb{I}_{\{\zeta^T_s=\sigma z_i\}}\mathrm{d}s=\displaystyle\sum\limits_{i=1}^{l}\displaystyle\int_{0}^{t}\mathbb{I}_{\{\zeta^T_s=\sigma z_i\}}\mathrm{d}\langle\zeta^T,\zeta^T\rangle_s=\displaystyle\sum\limits_{i=1}^{l}\displaystyle\int_{-\infty}^{+\infty}L^{\zeta^T}(t,x)\mathbb{I}_{\{x=\sigma z_i\}}\mathrm{d}x=0.
		\end{equation*}
		Hence, the set $$\bigcup\limits_{i=1}^l\{0\leq s\leq t\wedge\nu: \zeta^T_s=\sigma z_i\}$$ has Lebesgue measure zero. Consequently, for every $i\in\{1,\ldots,n\}$,
		\begin{small}
			\begin{align}
				&\lim\limits_{h \downarrow 0}\dfrac{1}{h}\displaystyle\int_{0}^{h}\bigg\vert p\bigg(\dfrac{sr}{r+s},x,\dfrac{s}{r+s}\sigma z_i\bigg)-\exp\bigg(\dfrac{\sigma z_i(\sigma z_i-\zeta^T_s)}{s}\bigg)p(r,\zeta^T_s,\sigma z_i)\bigg\vert\mathrm{d}r\nonumber\\
				\leq& \exp\bigg(\dfrac{\sigma z_i(\sigma z_i-\zeta^T_s)}{s}\bigg)\bigg[\lim\limits_{h \downarrow 0} \displaystyle\int_{0}^{h}p\bigg(\dfrac{sr}{r+s},\zeta^T_s,\sigma z_i\bigg)\mathrm{d}r+\lim\limits_{h \downarrow 0}\dfrac{1}{h}\displaystyle\int_{0}^{h}p(r,\zeta^T_s,\sigma z_i)\mathrm{d}r\bigg]=0.\label{eqlimdominatedconvergence}
			\end{align}
		\end{small}
		From \eqref{eqestimatedominatedconvergence}, \eqref{eqlimdominatedconvergence}, and Lebesgue's dominated convergence theorem it follows that for every $i\in\{1,\ldots,l\}$, $\mathbb{P}$-a.s.,
		\begin{align*}
			\displaystyle\int_{t_0\wedge\nu}^{t\wedge\nu}\dfrac{1}{h}\displaystyle\int_{0}^{h}\bigg\vert p\bigg(\dfrac{sr}{r+s},\zeta^T_s,\dfrac{s}{r+s}\sigma z_i\bigg)&-\exp\bigg(\dfrac{\sigma z_i(\sigma z_i-\zeta^T_s)}{s}\bigg)p(r,\zeta^T_s,\sigma z_i)\bigg\vert\mathrm{d}rf(s,z_i)u(s,\zeta^T_s)\,\mathrm{d}s
		\end{align*}
		goes to $0$ as $h \downarrow 0$.
		This means that we have to deal with the limit behaviour as $h\downarrow 0$ of
		\begin{equation*}
			\displaystyle\sum\limits _{i=1}^{l}\displaystyle\int_{t_0\wedge\nu}^{t\wedge\nu}\dfrac{1}{h}\displaystyle\int_{0}^{h}\exp\bigg(\dfrac{\sigma z_i(\sigma z_i-\zeta^T_s)}{s}\bigg)p(r,\zeta^T_s,\sigma z_i)\mathrm{d}rf(s,z_i)u(s,\zeta^T_s)\,\mathrm{d}s\,p_i.
		\end{equation*}
		With the notation,
		\begin{equation}
			(q_i(h,x))_{1\leq i\leq l}:=\bigg(\dfrac{1}{h}\displaystyle\int_{0}^{h}p(r,x,\sigma z_i)\mathrm{d}r\bigg)_{1\leq i\leq l},\,\,0<h\leq T,\,\,x\in \mathbb{R},\label{eqq_h}
		\end{equation}
		the occupation time formula yields that for every $i\in \{1,\ldots,l\}$,
		\begin{align}
			\displaystyle\int_{t_0\wedge\nu}^{t\wedge\nu}\dfrac{1}{h}&\displaystyle\int_{0}^{h}p(r,\zeta^T_s,\sigma z_i)\mathrm{d}r\exp\bigg(\dfrac{\sigma z_i(\sigma z_i-\zeta^T_s)}{s}\bigg)f(s,z_i)u(s,\zeta^T_s)\,\mathrm{d}s\nonumber\\
			&=\displaystyle\int_{t_0}^{t}q_i(h,\zeta^T_s)\exp\bigg(\dfrac{\sigma z_i(\sigma z_i-\zeta^T_s)}{s}\bigg)f(s,z_i)u(s,\zeta^T_s)\,\mathrm{d}\langle\zeta^T,\zeta^T\rangle_s\nonumber\\
			&=\displaystyle\int_{-\infty}^{+\infty}\displaystyle\int_{t_0}^{t}\exp\bigg(\dfrac{\sigma z_i(\sigma z_i-x)}{s}\bigg)f(s,z_i)u(s,x)\mathrm{d}L^{\zeta^T}(s,x)q_i(h,x)\mathrm{d}x,\,\,\mathbb{P}\text{-a.s.}\label{eqoccupation time1}
		\end{align}
		Now we can state the following lemma which will allow to complete the proof of Theorem \ref{thmcompensator}.
		\begin{lemma}\label{lemmahelp}
			The functions
			\begin{equation}
				x\in \mathbb{R}\longrightarrow k_i(x)=\displaystyle\int_{t_0}^{t}\exp\bigg(\dfrac{\sigma z_i(\sigma z_i-x)}{s}\bigg)f(s,z_i)u(s,x)\mathrm{d}L^{\zeta^T}(s,x),\,\,i=1,\ldots,l,\label{eqk}
			\end{equation}
			are continuous and bounded.
		\end{lemma}
		\begin{proof}
			We first need to show the following two statements:
			\begin{enumerate}
				\item[(i)] For all $x\in \mathbb{R}$ and $0 < t_0 < t$, the function $s\in [t_0, t]\longrightarrow u(s,x)$ is continuous.
				\item[(ii)]  For all $0 < t_0 < t$,
				\begin{equation}
					\lim\limits_{n\rightarrow +\infty}\sup\limits_{s\in [t_0,t]}\vert u(s,x_n)-u(s,x)\vert=0,\label{eqg(s,x_n)-g(s,x)}
				\end{equation}
				where $(x_n)_{n\in \mathbb{N}}$ is a sequence converging monotonically to $x\in \mathbb{R}$.
			\end{enumerate}
			Proof of statement (i): we consider the function defined on $[t_0,t]\times\mathbb{R}$ by
			\begin{equation}
				b(s,x)=\displaystyle\sum\limits _{i=1}^{l}b_i(s,x)\,p_i,\label{eqd(s,x)}
			\end{equation}
			where,
			\begin{equation}
				b_i(s,x)=\displaystyle\int_{s}^{T}p\bigg(\dfrac{s(r-s)}{r},x,\dfrac{s}{r}\sigma z_i\bigg)\,f(r,z_i)\mathrm{d}r,\,\,i=1,\ldots,l.
			\end{equation}
			We recall that $$u(s,x)=\bigg[b(s,x)+\displaystyle\sum\limits _{i=1}^{l}F(T,z_i)\varphi_{s}^{T,z_i}(x)p_i\bigg]^{-1},$$ let $s_n$, $s\in[t_0, t]$ such that $s_n \rightarrow s$ as
			$n\rightarrow +\infty$. For every $i\in \{1,\ldots,l\}$, we have 
			\begin{multline}
				b_i(s_n,x)=\displaystyle\int_{t_0}^{T}\mathbb{I}_{\{s_n<r\}}\sqrt{\dfrac{r}{2\pi s_n(r-s_n)}}\exp\bigg(-\dfrac{r
				}{2s_n(r-s_n)}\bigg(x-\dfrac{s_n}{r}\sigma z_i\bigg)^2\bigg)f(r,z_i)\mathrm{d}r\\
				=\displaystyle\int_{t_0}^{t}\mathbb{I}_{\{s_n<r\}}\sqrt{\dfrac{r}{2\pi s_n(r-s_n)}}\exp\bigg(-\dfrac{r
				}{2s_n(r-s_n)}\bigg(x-\dfrac{s_n}{r}\sigma z_i\bigg)^2\bigg)f(r,z_i)\mathrm{d}r\\
				+\displaystyle\int_{t}^{T}\sqrt{\dfrac{r}{2\pi s_n(r-s_n)}}\exp\bigg(-\dfrac{r
				}{2s_n(r-s_n)}\bigg(x-\dfrac{s_n}{r}\sigma z_i\bigg)^2\bigg)f(r,z_i)\mathrm{d}r.\label{eqd(s_n,x)}
			\end{multline}
			For the first integral,
			\begin{multline}
				\displaystyle\int_{t_0}^{t}\mathbb{I}_{(s_n,+\infty)}(r)\sqrt{\dfrac{r}{2\pi s_n(r-s_n)}}\exp\bigg(-\dfrac{r
				}{2s_n(r-s_n)}\bigg(x-\dfrac{s_n}{r}\sigma z_i\bigg)^2\bigg)f(r,z_i)\mathrm{d}r\\
				=\displaystyle\int_{0}^{t}\mathbb{I}_{\{r<t-s_n\}}\sqrt{\dfrac{r+s_n}{2\pi r s_n}}\exp\bigg(-\dfrac{r+s_n
				}{2r s_n}\bigg(x-\dfrac{s_n}{r+s_n}\sigma z_i\bigg)^2\bigg)f(r+s_n,z_i)\mathrm{d}r,
			\end{multline}
			we estimate the integrand of the right hand side by $\sqrt{\frac{t}{\pi \,t_0\, r}}\,\sup\limits_{r\in[0,2t]}f(r,z_i)$, which is integrable over $(0,t]$. Thus, we can apply Lebesgue's theorem to conclude that
			\begin{multline}
				\lim\limits_{n\rightarrow \infty}\displaystyle\int_{t_0}^{t}\mathbb{I}_{(s_n,T]}(r)\sqrt{\dfrac{r}{2\pi s_n(r-s_n)}}\exp\bigg(-\dfrac{r
				}{2s_n(r-s_n)}\bigg(x-\dfrac{s}{r}\sigma z_i\bigg)^2\bigg)f(r,z_i)\mathrm{d}r\\=\displaystyle\int_{t_0}^{t}\mathbb{I}_{(s,T]}(r)\sqrt{\dfrac{r}{2\pi s(r-s)}}\exp\bigg(-\dfrac{r
				}{2s(r-s)}\bigg(x-\dfrac{s}{r}\sigma z_i\bigg)^2\bigg)f(r,z_i)\mathrm{d}r.\label{eqlimits_n1}
			\end{multline}
			Concerning the second integral in \eqref{eqd(s_n,x)}, we estimate the integrand by $ \sqrt{\frac{r}{2\pi t_0(r-t)}}f(r,z_i) $ which is integrable over $[t,T]$. Hence, using Lebesgue's dominated convergence theorem we obtain
			\begin{multline}
				\lim\limits_{n\rightarrow \infty}\displaystyle\int_{t}^{T}\sqrt{\dfrac{r}{2\pi s_n(r-s_n)}}\exp\bigg(-\dfrac{r
				}{2s_n(r-s_n)}\bigg(x-\dfrac{s}{r}\sigma z_i\bigg)^2\bigg)f(r,z_i)\mathrm{d}r\\=\displaystyle\int_{t}^{T}\sqrt{\dfrac{r}{2\pi s(r-s)}}\exp\bigg(-\dfrac{r
				}{2s(r-s)}\bigg(x-\dfrac{s}{r}\sigma z_i\bigg)^2\bigg)f(r,z_i)\mathrm{d}r.\label{eqlimits_n2}
			\end{multline}
			From \eqref{eqlimits_n1} and \eqref{eqlimits_n2}, it follows that 
			\begin{equation*}
				\lim\limits_{n\rightarrow +\infty}b_i(s_n,x)=b_i(s,x),\,\text{for every}\,\,i\in \{1,\ldots,l\}.
			\end{equation*}
		We conclude from \eqref{eqd(s,x)} that the function $b$ is continuous on $[t_0, t]$. Since the functions $s\in [t_0, t]\longrightarrow \varphi_{s}^{T,z_i}(x)$ are also continuous, the function $s\in [t_0, t]\longrightarrow u(s,x)$ is continuous on $[t_0, t]$ for every $x\in \mathbb{R}$.\\
			Proof of statement (ii): using the fact that
			\begin{equation*}
				\vert u(s,x_n)-u(s,x) \vert\leq u(s,x_n)\,u(s,x)\Big[\vert b(s,x_n)-b(s,x) \vert+\displaystyle\sum\limits _{i=1}^{l}F(T,z_i)\vert\varphi_{s}^{T,z_i}(x_n)-\varphi_{s}^{T,z_i}(x)\vert p_i\Big],
			\end{equation*}
			it follows from \eqref{eqestimateg(s,x)} that
			\begin{align}
				\sup\limits_{s\in[t_0,t]}\vert u(s,x_n)-u(s,x) \vert
				\leq& D(t_0,t,x_n)\,D(t_0,t,x)\bigg[\displaystyle\sum\limits_{i=1}^l\,p_i\sup\limits_{s\in[t_0,t]}\vert b_i(s,x_n)-b_i(s,x) \vert\\
				&+\displaystyle\sum\limits _{i=1}^{l}F(T,z_i)p_i\sup\limits_{s\in[t_0,t]}\vert\varphi_{s}^{T,z_i}(x_n)-\varphi_{s}^{T,z_i}(x)\vert\bigg].\label{eqsupg(s,x_n)-g(s,x)}
			\end{align}
			On the other hand, we have
			\begin{align}
				&\vert b_i(s,x_n)-b_i(s,x) \vert=\exp\bigg(\dfrac{z_i^2}{2s}\bigg)\vert \kappa^{i}_2(s,x_n)\kappa^{i}_1(s,x_n)-\kappa^{i}_2(s,x)\kappa^{i}_1(s,x)\vert\nonumber\\
				&\leq \exp\bigg(\dfrac{z_i^2}{2s}\bigg)\Big[\kappa^{i}_2(s,x_n)\vert \kappa^{i}_1(s,x_n)- \kappa^{i}_1(s,x)\vert+ \kappa^{i}_1(s,x)\vert \kappa^{i}_2(s,x_n)-\kappa^{i}_2(s,x)\vert\Big],\label{eqd(s,x_n)-d(s,x)}
			\end{align}
			and
			\begin{align}
				&\vert \varphi_{s}^{T,z_i}(x_n)-\varphi_{s}^{T,z_i}(x) \vert=\dfrac{1}{p(T,\sigma z_i)}\vert p(T-s,\sigma z_i-x_n)p(s,x_n)-p(T-s,\sigma z_i-x)p(s,x)\vert\nonumber\\
				&\leq \sqrt{\dfrac{T}{2\pi s(T-s)}}\exp\bigg(\frac{\sigma^2 z_i^2}{2T}\bigg)\Big[\vert \kappa^{i}_3(s,x_n)- \kappa^{i}_3(s,x)\vert+ \vert \kappa_4(s,x_n)-\kappa_4(s,x)\vert\Big],\label{eqphi(s,x_n)-phi(s,x)}
			\end{align}
			where, for $i\in \{1,\ldots,l\}$,
			\begin{equation}
				\kappa^{i}_1(s,x)=\displaystyle\int_{s}^{+\infty}\sqrt{\dfrac{r}{2\pi s(r-s)}}\exp\bigg(-\dfrac{r
				}{2s(r-s)}(x-\sigma z_i)^2\bigg)\exp\bigg(\dfrac{\sigma^2 z_i^2}{2r}\bigg)f(r,z_i)\mathrm{d}r,
			\end{equation}
			\begin{equation}
				\kappa^{i}_2(s,x)=\exp\bigg(-\dfrac{\sigma z_ix}{s}\bigg),\,\,\,\kappa^{i}_3(s,x)=\exp\bigg(-\dfrac{(x-\sigma z_i)^2}{2(T-s)}\bigg),
			\end{equation}
			and
			\begin{equation}
				\kappa_4(s,x)=\exp\bigg(-\dfrac{x^2}{2s}\bigg),
			\end{equation}
			It is easy to show that, for all $x\in \mathbb{R}$, the functions $s\longrightarrow \kappa^{i}_1(s,x)$ are continuous on $[t_0,t]$. Hence, in \eqref{eqd(s,x_n)-d(s,x)} and \eqref{eqphi(s,x_n)-phi(s,x)} we can pass to the supremum over $[t_0,t]$ and obtain the following 
			\begin{multline}
				\sup\limits_{s\in[t_0,t]}\vert b_i(s,x_n)-b_i(s,x) \vert\\
				\leq \exp\Big(\dfrac{z^2}{2t_0}\Big)\Big[\kappa^{i}_2(t_0,x_n)\vee\kappa^{i}_2(t,x_n)\Big]\sup\limits_{s\in[t_0,t]}\vert \kappa^{i}_1(s,x_n)- \kappa^{i}_1(s,x)\vert\\
				+\exp\Big(\dfrac{z^2}{2t_0}\Big)\sup\limits_{s\in[t_0,t]}\kappa^{i}_1(s,x)\sup\limits_{s\in[t_0,t]}\vert \kappa^{i}_2(s,x_n)-\kappa^{i}_2(s,x)\vert,
			\end{multline}
			and
			\begin{multline}
				\sup\limits_{s\in[t_0,t]}\vert \varphi_{s}^{T,z_i}(x_n)-\varphi_{s}^{T,z_i}(x) \vert
				\leq \sqrt{\dfrac{T}{2\pi t_0(T-t)}}\exp\bigg(\frac{\sigma^2 z_i^2}{2T}\bigg)\\\times\Big[\sup\limits_{s\in[t_0,t]}\vert \kappa^{i}_3(s,x_n)- \kappa^{i}_3(s,x)\vert+ \sup\limits_{s\in[t_0,t]}\vert \kappa_4(s,x_n)-\kappa_4(s,x)\vert\Big],
			\end{multline}
			for every $i\in\{1,\ldots,l\}$. Without loss of generality we can assume that $x_n\leq \sigma z_i$ for all $n\in \mathbb{N}$ or $x_n\geq \sigma z_i$ for all $n\in \mathbb{N}$, that depends on whether $x\geq \sigma z_i$ or $x\leq \sigma z_i$. Since the sequence $x_n$ converges monotonically to $x$, it is easy to see that the sequences of functions $\kappa^{i}_1(.,x_n)$, $\kappa^{i}_2(.,x_n)$, $\kappa^{i}_3(.,x_n)$ and $\kappa_4(.,x_n)$ are monotone and that for all $s\in[t_0,t]$, $\kappa^{i}_1(s,x_n)$, $\kappa^{i}_2(s,x_n)$, $\kappa^{i}_3(s,x_n)$ and $\kappa_4(s,x_n)$ converge to $\kappa^{i}_1(s,x)$, $\kappa^{i}_2(s,x)$, $\kappa^{i}_3(s,x)$ and $\kappa_4(s,x)$, respectively. Furthermore, since the functions $s\longrightarrow \kappa^{i}_1(s,x)$, $s\longrightarrow \kappa^{i}_2(s,x)$, $s\longrightarrow \kappa^{i}_3(s,x)$ and $s\longrightarrow \kappa_4(s,x)$ are also continuous
			on $[t_0, t]$, according to Dini's theorem, $\kappa^{i}_1(.,x_n)$, $\kappa^{i}_2(.,x_n)$, $\kappa^{i}_3(.,x_n)$ and $\kappa_4(.,x_n)$ converge uniformly to $\kappa^{i}_1(.,x)$, $\kappa^{i}_2(.,x)$, $\kappa^{i}_3(.,x)$ and $\kappa_4(.,x)$ on $[t_0, t]$, respectively. This implies that 
			\begin{equation}
				\lim\limits_{n \rightarrow +\infty}\sup\limits_{s\in[t_0,t]}\vert b_i(s,x_n)-b_i(s,x) \vert=0,
			\end{equation}
			and
			\begin{equation}
				\lim\limits_{n \rightarrow +\infty}\sup\limits_{s\in[t_0,t]}\vert \varphi_{s}^{T,z_i}(x_n)-\varphi_{s}^{T,z_i}(x) \vert=0.
			\end{equation}
			Hence, we get \eqref{eqg(s,x_n)-g(s,x)} from \eqref{eqsupg(s,x_n)-g(s,x)}. This completes the proof of statement (ii). We have now all the ingredients to show that the functions $k_i, i=1,\ldots,l$, defined in \eqref{eqk} are bounded and continuous. Let $E$ be a subset of $\mathbb{R}$ such that $E=[-M_t-1,M_t+1]$ where $M$ is defined in \eqref{eqM_tbound}, since for $s\in [0,t]$ and $x\notin E$, $L^{\zeta^T}(t,x)=0$, it is sufficient to show that  $x\longrightarrow k_i(x)$ is continuous on the compact $E$. Let $x_n$ be a sequence from $E$ converging monotonically to $x \in E$, with the notation
			\begin{equation}
				G_i(s,x)=\exp\bigg(\dfrac{\sigma z_i(\sigma z_i-x)}{s}\bigg)u(s,x)f_{\tau}(s),\,\, i=1,\ldots,l,
			\end{equation} 
			we have 
			\begin{multline}
				\vert k_i(x_n)-k_i(x)\vert=\bigg\vert\displaystyle\int_{t_0}^{t}G_i(s,x_n)\mathrm{d}L^{\zeta^T}(s,x_n)-\displaystyle\int_{t_0}^{t}G_i(s,x)\mathrm{d}L^{\zeta^T}(s,x)\bigg\vert
				\\ \leq \displaystyle\int_{t_0}^{t}\vert G_i(s,x_n)-G_i(s,x)\vert\mathrm{d}L^{\zeta^T}(s,x_n)+\bigg\vert\displaystyle\int_{t_0}^{t}G_i(s,x)\mathrm{d}L^{\zeta^T}(s,x)-\displaystyle\int_{t_0}^{t}G_i(s,x)\mathrm{d}L^{\zeta^T}(s,x_n)\bigg\vert\\
				\leq L^{\zeta^T}([t_0,t],x_n)\sup\limits_{s\in [t_0,t]}\vert G_i(s,x_n)-G_i(s,x)\vert \\+\bigg\vert\displaystyle\int_{t_0}^{t}G_i(s,x)\mathrm{d}L^{\zeta^T}(s,x)-\displaystyle\int_{t_0}^{t}G_i(s,x)\mathrm{d}L^{\zeta^T}(s,x_n)\bigg\vert.
			\end{multline}
			It follows from the fact that $\sup\limits_{s\in [t_0,t]}\vert u(s,x_n)-\mathfrak{g}(s,x)\vert$ converges as $n\rightarrow +\infty$ to $0$ that for every $i\in\{1,\ldots,l\}$
			\begin{equation}
				\lim\limits_{n\rightarrow +\infty}\sup\limits_{s\in [t_0,t]}\vert G_i(s,x_n)-G_i(s,x)\vert=0.
			\end{equation}
			On the other hand,  from the the third statement of  Proposition \ref{proplocaltime}, we obtain that 
			\begin{align}
				\lim\limits_{n\rightarrow +\infty}	\bigg\vert\displaystyle\int_{t_0}^{t}G_i(s,x)\mathrm{d}L^{\zeta^T}(s,x)-\displaystyle\int_{t_0}^{t}G_i(s,x)\mathrm{d}L^{\zeta^T}(s,x_n)\bigg\vert=0.
			\end{align}
			Which implies that for every $i\in\{1,\ldots,l\}$,
			\begin{equation}
				\lim\limits_{n\rightarrow +\infty}\vert k_i(x_n)-k_i(x)\vert=0.
			\end{equation}
			Which completes the proof of Lemma \ref{lemmahelp}.
		\end{proof}
		For $i=1,\ldots,l$, let $\mathbb{Q}_h^i$ be the probability measure with
		density $q_i(h,.)$ defined in \eqref{eqq_h}. Observe that $\mathbb{Q}_h^i$ converges weakly as $h\downarrow 0$ to the Dirac measure $\delta_{\sigma z_i}$ at $\sigma z_i$. Thus, for every $i\in\{1,\ldots,l\}$,
		\begin{equation}
			\lim\limits_{h \downarrow 0}	\displaystyle\int_{-\infty}^{+\infty}k_i(x)q_i(h,x)\mathrm{d}x=k_i(\sigma z_i).
		\end{equation}
		Thus, from \eqref{eqoccupation time1} and \eqref{eqk}, we obtain
		\begin{align*}
			\lim\limits_{h \downarrow 0}\displaystyle\int_{t_0\wedge\nu}^{t\wedge\nu}\dfrac{1}{h}\displaystyle\int_{0}^{h}p(r,\zeta^T_s,\sigma z_i)\mathrm{d}r\exp\bigg(\dfrac{\sigma z_i(\sigma z_i-\zeta^T_s)}{s}\bigg)&f(s,z_i)u(s,\zeta^T_s)\,\mathrm{d}s\\&=\displaystyle\int_{t_0}^{t}f(s,z_i)u(s,\sigma z_i)\mathrm{d}L^{\zeta^T}(s,\sigma z_i).
		\end{align*}
		Note that the functions $f(.,z_i)$ is uniformly continuous on $[t_0, t + 1]$. We fix $\varepsilon> 0$ and choose
		$0 < \rho \leq 1$ such that, for every $0 \leq r < \rho$, $\vert f(s+r,z_i)-f(s,z_i)\vert \leq \varepsilon$. For every $i\in \{1,\ldots,l\}$, we have
		\begin{align}
			\limsup\limits_{h\downarrow 0}\bigg\vert &\displaystyle\int_{t_0\wedge\nu}^{t\wedge\nu}\dfrac{1}{h}\displaystyle\int_{0}^{h}p\bigg(\dfrac{sr}{r+s},\zeta^T_{s},\dfrac{s}{r+s}\sigma z_i\bigg)[f(s+r,z_i)-f(s,z_i)]\mathrm{d}r\,u(s,\zeta^T_{s})\mathrm{d}s \bigg\vert\nonumber\\
			\leq& \limsup\limits_{h\downarrow 0}\displaystyle\int_{t_0\wedge \nu}^{t\wedge \nu}\dfrac{1}{h}\displaystyle\int_{0}^{h}p\bigg(\dfrac{sr}{r+s},\zeta^T_{s},\dfrac{s}{r+s}\sigma z_i\bigg)\vert f(s+r,z_i)-f(s,z_i)\vert\mathrm{d}r\,u(s,\zeta^T_{s})\mathrm{d}s \nonumber\\
			=&\varepsilon\, \limsup\limits_{h\downarrow 0}\displaystyle\int_{t_0\wedge \nu}^{t\wedge \nu}\dfrac{1}{h}\displaystyle\int_{0}^{h}p(r,\zeta^T_{s},\sigma z_i)\mathrm{d}r\exp\bigg(\dfrac{\sigma z_i(\sigma z_i-\zeta^T_{s})}{s}\bigg)\,u(s,\zeta^T_{s})\mathrm{d}s\nonumber\\
			=&\varepsilon\, \displaystyle\int_{t_0}^{t}u(s,\sigma z_i)\mathrm{d}L^{\zeta^T}(s,\sigma z_i).
		\end{align}
		Since $\varepsilon > 0$ is chosen arbitrarily and the integral above is, $\mathbb{P}$-a.s., finite, we conclude that \eqref{eqlimit0} holds. Which proves that, $\mathbb{P}$-a.s., $A_t^h-A_{t_0}^h$ converges as $h\downarrow 0$ to
		\begin{align*}
			\displaystyle\sum\limits_{k=1}^l\displaystyle\int_{t_0\wedge \nu}^{t\wedge \nu}\dfrac{p_k\dfrac{f(s,z_k)}{p(s,\sigma z_k)}}{\displaystyle\sum\limits_{i=1}^l\bigg[\displaystyle\int_{s}^{T}\Phi_{s,r}(z_k,z_i)f(r,z_i)\mathrm{d}r+F(T,z_i)\Phi_{s,T}(z_k,z_i)\bigg]p_i}\mathrm{d}L^{\zeta^T}(s,\sigma z_k).
		\end{align*}
		Recalling that $A_t^h-A_{t_0}^h$ converges as $h\downarrow 0$ to $K'_t-K'_{t_0}$ in the sense of the weak topology $\sigma(L^1,L^{\infty})$, using \cite[II.T21]{M} and \cite[II.T23]{M}, we conclude that, $\mathbb{P}$-a.s.,
		\begin{align*}
			K'_t-K'_{t_0}=\displaystyle\sum\limits_{k=1}^l\displaystyle\int_{t_0\wedge \nu}^{t\wedge \nu}\dfrac{p_k\dfrac{f(s,z_k)}{p(s,\sigma z_k)}}{\displaystyle\sum\limits_{i=1}^l\bigg[\displaystyle\int_{s}^{T}\Phi_{s,r}(z_k,z_i)f(r,z_i)\mathrm{d}r+F(T,z_i)\Phi_{s,T}(z_k,z_i)\bigg]p_i}\mathrm{d}L^{\zeta^T}(s,\sigma z_k),
		\end{align*} 
		for all $t_0$, $t$ such that $0<t_0<t$. Passing to the limit as $t_0 \downarrow 0$, we get that, $\mathbb{P}$-a.s.,
		\begin{align}
			K'_t=\displaystyle\sum\limits_{k=1}^l\displaystyle\int_{0}^{t\wedge \nu}\dfrac{p_k\dfrac{f(s,z_k)}{p(s,\sigma z_k)}}{\displaystyle\sum\limits_{i=1}^l\bigg[\displaystyle\int_{s}^{T}\Phi_{s,r}(z_k,z_i)f(r,z_i)\mathrm{d}r+F(T,z_i)\Phi_{s,T}(z_k,z_i)\bigg]p_i}\mathrm{d}L^{\zeta^T}(s,\sigma z_k).\label{eqK'=int}
		\end{align}
		Since the right hand side in \eqref{eqK'=int} is right-continuous in $t$ and that $K=K'$, up to the indistinguishability, the compensator of the random time $\nu$ with respect to $\mathbb{F}^{\zeta^T}$ is the process $K=(K_t, t\geq 0)$ given by
		\begin{equation*}
			K_t=\displaystyle\sum\limits_{k=1}^l\displaystyle\int_{0}^{t\wedge \nu}\dfrac{p_k\dfrac{f(s,z_k)}{p(s,\sigma z_k)}}{\displaystyle\sum\limits_{i=1}^l\bigg[\displaystyle\int_{s}^{T}\Phi_{s,r}(z_k,z_i)f(r,z_i)\mathrm{d}r+F(T,z_i)\Phi_{s,T}(z_k,z_i)\bigg]p_i}\mathrm{d}L^{\zeta^T}(s,\sigma z_k).
		\end{equation*}	
		Which completes the proof.
	\end{proof}
	\begin{corollary}
		The random time $\nu$ is a totally inaccessible stopping time with respect to $\mathbb{F}^{\zeta^T}$. 
	\end{corollary}
	\begin{proof}
		The process $A$ given by \eqref{eqcompensatorexpression} is continuous. By \cite[Corollary 25.18]{Kall} this is a
		necessary and sufficient condition for $\nu$ to be totally inaccessible with respect to $\mathbb{F}^{\zeta^T}$. 
	\end{proof}
	\begin{remark}
		The indicator process $(\mathbb{I}_{\{\nu\leq t\}}, t\geq 0)$ does not admit an intensity with respect to the filtration $\mathbb{F}^{\zeta^T}$ since it is not possible to apply, for instance, Aven's Lemma for computing the compensator (see \cite{A}).
	\end{remark}
	Let $\mathfrak{H}=(\mathfrak{H}_t, t\geq 0)$ be the process defined by 
	\begin{equation}
		\mathfrak{H}_t=Z_TH_t=Z_T\,\mathbb{I}_{\{\nu\leq t\}},\,\,t\geq 0.
	\end{equation}
	We derive the explicit expression of the compensator associated with the process $\mathfrak{H}$ in the following way.
	\begin{proposition}\label{propmathfrak}
		The compensator of $\mathfrak{H}$ with respect to $\mathbb{F}^{\zeta^T}$ is given by
		\begin{equation}
			\mathfrak{K}_t=\displaystyle\sum\limits_{k=1}^l\displaystyle\int_{0}^{t\wedge \nu}\dfrac{p_k\dfrac{f(s,z_k)}{p(s,\sigma z_k)}\zeta^T_s}{\displaystyle\sum\limits_{i=1}^l\bigg[\displaystyle\int_{s}^{T}\Phi_{s,r}(z_k,z_i)f(r,z_i)\mathrm{d}r+F(T,z_i)\Phi_{s,T}(z_k,z_i)\bigg]p_i}\mathrm{d}L^{\zeta^T}(s,\sigma z_k).\label{eqcompensatorH}
		\end{equation}
	\end{proposition}
	\begin{proof}
		Note that $\nu$ is an $\mathcal{F}_{\nu}^{\zeta^T}$-stopping time with compensator $K$ given by \eqref{eqcompensatorexpression}, and $Z_T$ is an $\mathcal{F}_{\nu}^{\zeta^T}$-measurable random variable. Hence, for any non-negative predictable process $U$, we have on one hand
		\begin{align}
			\mathbb{E}\bigg[\displaystyle\int_0^{\infty}U_s\mathrm{d}\mathfrak{H}_s \bigg]=\mathbb{E}[U_{\nu}\,Z_T\,\mathbb{I}_{\{0<\nu<\infty\}}]=\mathbb{E}[U_{\nu}\,Z_T].\label{eq11compensator}
		\end{align}
		On the other hand, since the process $U\,\zeta^T$ is predictable, we have 
		\begin{align}
			\mathbb{E}\bigg[\displaystyle\int_0^{\infty}U_s\mathrm{d}\mathfrak{K}_s \bigg]&=\mathbb{E}\bigg[\displaystyle\int_0^{\infty}U_s\zeta^T_s\mathrm{d}K_s \bigg]\nonumber\\
			&=\mathbb{E}\bigg[\displaystyle\int_0^{\infty}U_s\zeta^T_s\mathrm{d}H_s \bigg]\nonumber\\
			&=\mathbb{E}[U_{\nu}\,\zeta^T_{\nu}\,\mathbb{I}_{\{0<\nu<\infty\}}]\nonumber\\
			&=\mathbb{E}[U_{\nu}\,Z_T].\label{eq22compensator}
		\end{align}
		It follows from \eqref{eq11compensator} and \eqref{eq22compensator} that for any non-negative predictable process $U$, we have
		\begin{align*}
			\mathbb{E}\bigg[\displaystyle\int_0^{\infty}U_s\mathrm{d}\mathfrak{H}_s \bigg]=\mathbb{E}\bigg[\displaystyle\int_0^{\infty}U_s\mathrm{d}\mathfrak{K}_s \bigg].
		\end{align*}
		Therefore, the process $\mathfrak{K}$ defined in \eqref{eqcompensatorH} is the compensator of $\mathfrak{H}$ with respect to $\mathbb{F}^{\zeta^T}$. Which is the desired result. 
	\end{proof}
	\section*{Conclusion}
	\quad\,\,This paper delves into the intricate task of modelling information flows within financial markets, by extending the information-based asset pricing framework of Brody, Hughston \& Macrina. The primary improvement lies in the integration of a default scenario for the underlying issuer. In this augmentation, the framework extends its reach to encompass a noisy information process that undergoes evolution over a potentially random time-horizon. The duration of the information flow is contingent upon whether the default occurs or not before a fixed future time when the value of the cash-flow would be revealed.\\
	In detail, the information flow pertaining to a non-defaultable  cash flow $Z_T$, scheduled for realization at time $T$ and the potential bankruptcy time $\tau$ of the asset’s writer are modelled through the natural completed filtration generated by a Brownian random bridge $\zeta^T$ with length $\nu=\tau \wedge T$ and pinning point $\sigma Z_T$, where $\sigma$ is a constant. Naturally, the time of bankruptcy may depend on the cash flow, hence, independence between $Z_T$ and $\tau$ is not assumed.
	
	We utilize the Markov property of the market information process $\zeta^T$ and the stopping time property of $\nu=\tau \wedge T$ to derive the pricing formula for a special option. Additionally, we employ the canonical decomposition of $\zeta^T$, i.e., the Doob-Meyer decomposition as semimartingales in its own filtration, to derive a stochastic differential equation satisfied by the price process.
	
	Adopting the methodology pioneered by P.-A.~Meyer, we explicitly compute the compensator of $\nu=\tau \wedge T$. Furthermore, through a well-established equivalence between the categories of stopping times and the regularity of their compensators, we demonstrate that $\nu=\tau \wedge T$ is totally inaccessible. This characteristic proves particularly relevant in financial markets where predicting the writer's default time is impossible until the actual default event unfolds, emphasizing the unpredictability inherent in such systems. 
	\vspace{0,5cm}
	
	\textbf{Acknowledgments:} I would like to express my deep gratitude to the referee for carefully reading the manuscript and providing invaluable comments.
	
	\vspace{0,2cm}

\end{document}